\newtheorem{definition}{Definition}[section]
\newtheorem{theorem}{Theorem}[section]
\newtheorem{lemma}{Lemma}[section]
\newtheorem{remark}{Remark}[section]
\title[Numerical Approximation of Stochastic Linear Schr\"{o}dinger equation]{Finite Element Approximations of Stochastic Linear Schr\"{o}dinger equation driven by additive Wiener noise}
\author[S. Bhar]{Suprio Bhar}
\address{Indian Institute of Technology Kanpur, Kanpur-208016, India,}
\email{suprio@iitk.ac.in}
\author[M. Biswas]{Mrinmay Biswas}
\address{Indian Institute of Technology Kanpur, Kanpur-208016, India,}
\email{ mbiswas@iitk.ac.in}
\author[M. Prasad]{Mangala Prasad}
\address{Indian Institute of Technology Kanpur, Kanpur-208016, India,}
\email{ mangalap21@iitk.ac.in}
\begin{document}
	
	%
	
	
	
	\date{\today}
	
	\thanks{}
	
	\begin{abstract}
		In this article, we have analyzed semi-discrete finite element approximations of the Stochastic linear Schr\"{o}dinger equation in a bounded convex polygonal domain driven by additive Wiener noise. We use the finite element method for spatial discretization and derive an error estimate with respect to the discretization parameter of the finite element approximation. Numerical experiments have also been performed to support theoretical bounds. 
	\end{abstract}
 \keywords{
        Stochastic Schr\"{o}dinger equation, Wiener process, Finite element approximations, Semigroup approximations}
      \subjclass[2020] { 60H15, 65N30, 65M60, 35Q41}
	
	\maketitle
	\section{Introduction}\label{s1}
In this work, we study the finite element approximation of the stochastic linear 
Schr\"{o}dinger equation driven by additive noise. The model problem is 
\begin{equation}\label{eqn1}
\begin{aligned}
&du +i\,\Delta u\,dt = dW_1 + i\, dW_2 , \qquad (t,x)\in (0,\infty)\times \mathcal{O},\\[2mm]
&u(t,x)=0 , \qquad (t,x)\in (0,\infty)\times \partial\mathcal{O},\\[2mm]
&u(0,x)=u_0(x), \qquad x\in \mathcal{O},
\end{aligned}
\end{equation}
where $\mathcal{O}\subset \mathbb{R}^d$, $d=1,2,3$, is a bounded convex polygonal. The processes $\{W_j(t)\}_{t\ge 0}$, $j=1,2$, are two independent 
$L^2(\mathcal{O})$-valued Wiener processes defined on a filtered probability space 
$(\Omega,\mathcal{F},P,\{\mathcal{F}_t\}_{t\ge 0})$. Throughout the paper, 
$(\cdot,\cdot)$ and $\|\cdot\|$ denote the inner product and norm in $L^2(\mathcal{O})$, 
respectively, and the initial datum $u_0$ is assumed to be $\mathcal{F}_0$-measurable.

The Schr\"{o}dinger equation plays a central role in quantum mechanics, governing the 
time evolution of the wave function of a physical system \cite{griffiths}. Inspired by 
de~Broglie's hypothesis that every particle exhibits wave-like behaviour, Schr\"{o}dinger 
derived the celebrated equation that now carries his name. The model accurately predicts 
atomic bound states and agrees well with experimental observations; see 
\cite{Whittaker1989} for further details. The deterministic Schr\"{o}dinger equation and 
its semilinear variants have been extensively investigated in the classical literature; 
see, for example, \cite{courantL,dautray}.

From an applied and computational perspective, numerical approximation of stochastic 
Schr\"{o}dinger equations is of significant interest. Explicit analytical solutions are 
rarely available, and in higher spatial dimensions, finite element Galerkin methods offer 
a flexible and robust framework for approximating solutions of partial differential 
equations. This makes finite element techniques a natural choice for analysing stochastic 
extensions of Schr\"{o}dinger-type models.

The numerical analysis of stochastic partial differential equations (SPDEs) has seen 
substantial progress over recent decades. A large body of work addresses the stochastic 
heat equation; see, for example, 
\cite{pchow, kovacs9bit, Igyongy, kovacs10NA, JBWalsh, YYanBit4, YYanSiam5}. 
Similarly, the stochastic wave equation has been widely studied; we refer to 
\cite{DCohen16, DCohen13, Dcohen12, Kovacs10Siam, Amartin, MR2224753, CRoth, JBWalsh6} 
and the references therein. 

In contrast, numerical studies of stochastic Schr\"{o}dinger equations remain rather 
limited. Existing results mainly focus on finite difference schemes or spectral Galerkin 
approximations; see \cite{chuchuhong,jianbo, Feng}. These works provide strong 
convergence results under various assumptions, but the finite element method despite its 
flexibility and importance in higher dimensions has received considerably less attention 
in this context.

The main contribution of the present paper is the derivation of explicit strong 
convergence rates for a finite element Galerkin discretization of 
\eqref{eqn1}. Our analysis relies on Ritz and $L^2$ projections. The results fill a gap in the existing literature and 
provide a systematic foundation for the finite element approximation of stochastic 
Schr\"{o}dinger equations with additive noise.

\medskip

\noindent\textbf{Organization of the paper.}
Section \ref{main_results} introduces the model and summarizes the main results.  
Section~\ref{s2} collects preliminaries on Hilbert--Schmidt operators, 
infinite-dimensional Wiener processes, semigroup representations, and basic finite 
element tools.  
Section~\ref{prf_main} contains the proofs of the main results.  
Finally, Section~\ref{S_4} presents numerical experiments illustrating the strong 
convergence behaviour of the finite element scheme.
\section{Main Results}\label{main_results}

In this section we present our main  results for the spatial finite element 
approximation of the stochastic linear Schr\"{o}dinger equation \eqref{eqn1}. 
We begin with the deterministic case, where the noise terms $W_1$ and $W_2$ are absent, 
and derive error estimates for both the homogeneous and nonhomogeneous problems. 
These deterministic bounds form the foundation for the analysis of the fully 
stochastic equation, which is treated in the second part of this section.

\subsection{\bf Deterministic Version} 
We first study the deterministic non-homogeneous formulation and derive the corresponding finite element error bounds.
\subsubsection{Nonhomogeneous System}

We first study the spatially semidiscrete finite element approximation for the 
deterministic nonhomogeneous linear Schr\"odinger equation
\begin{align} \label{system2}
\begin{cases}
\dfrac{du}{dt} +i\,\Delta u = f, & \text{in } (0,\infty)\times\mathcal{O},\\[1.5mm]
u = 0, & \text{on } (0,\infty)\times \partial\mathcal{O},\\[1.5mm]
u(0,x) = u_0(x), & \text{in } \mathcal{O},
\end{cases}
\end{align}
where $\mathcal{O}\subset \mathbb{R}^d$, $d=1,2,3$, is a bounded convex 
polygonal domain with boundary $\partial\mathcal{O}$.  
Here $u, f : [0,\infty)\times\overline{\mathcal{O}}\to\mathbb{C}$ and 
$u_0:\overline{\mathcal{O}}\to\mathbb{C}$ are complex-valued functions.

To analyse the system componentwise, we write
\[
u(t) = u_1(t) + i\,u_2(t), \qquad 
f(t) = f_1(t) + i\,f_2(t), \qquad 
u_0 = u_{0,1} + i\,u_{0,2},
\]
where $u_1, u_2, f_1, f_2, u_{0,1}, u_{0,2}$ denote the real and imaginary parts 
of $u$, $f$, and $u_0$, respectively.  
With this notation, \eqref{system2} is equivalent to the real system
\begin{align} \label{system3}
\begin{cases}
\dot{u}_1 - \Delta u_2 = f_1, 
& \text{in } (0,\infty)\times\mathcal{O},\\[1mm]
\dot{u}_2 + \Delta u_1 = f_2, 
& \text{in } (0,\infty)\times\mathcal{O},\\[1mm]
u_1 = 0,\; u_2 = 0, 
& \text{on } (0,\infty)\times\partial\mathcal{O},\\[1mm]
u_1(0) = u_{0,1},\quad 
u_2(0) = u_{0,2}, 
& \text{in } \mathcal{O}.
\end{cases}
\end{align}

We assume that 
\[
f_1, f_2 \in L^2(0,\infty; \dot{H}^0),
\qquad 
u_{0,1}, u_{0,2} \in \dot{H}^0.
\]
Throughout, the dot ``$\cdot$’’ denotes the time derivative 
$\frac{\partial}{\partial t}$.  
The definition of the fractional spaces $\dot{H}^\alpha$, $\alpha\in\mathbb{R}$, 
is provided in Subsection~\ref{s2.1}.
\begin{definition}[Weak solution of system \eqref{system3}]
A pair $(u_1,u_2)^{\mathrm{T}} \in L^2(0,\infty;\dot{H}^1)\times L^2(0,\infty;\dot{H}^1)$ with 
$(\dot{u}_1,\dot{u}_2)^{\mathrm{T}} \in L^2(0,\infty;\dot{H}^0) \times L^2(0,\infty;\dot{H}^0)$ 
is called a weak solution of \eqref{system3} if for a.e.\ $t>0$,
\begin{equation}\label{system4}
\begin{split}
(\dot{u}_1(t),v_1) + (\nabla u_2(t),\nabla v_1) &= (f_1(t),v_1),\\
(\dot{u}_2(t),v_2) - (\nabla u_1(t),\nabla v_2) &= (f_2(t),v_2),
\qquad \forall v_1,v_2 \in \dot{H}^1,\\
u_1(0) = u_{0,1},\qquad 
u_2(0) = u_{0,2}.
\end{split}
\end{equation}
\end{definition}

The existence and uniqueness of a weak solution of \eqref{system3} is standard; 
see \cite{pgrisvard}. Our goal is to construct a spatially semidiscrete finite 
element approximation of the weak solution of \eqref{system3}.  

The semidiscrete analogue of \eqref{system4} is to find 
$u_{h,1}(t), u_{h,2}(t) \in V_h$ (the definition of $V_h$ is given in 
Subsection~\ref{s2.2}) such that
\begin{equation}\label{system5}
\begin{split}
(\dot{u}_{h,1}(t),\chi_1) + (\nabla u_{h,2}(t),\nabla \chi_1) 
&= (f_1(t),\chi_1),\\
(\dot{u}_{h,2}(t),\chi_2) - (\nabla u_{h,1}(t),\nabla \chi_2) 
&= (f_2(t),\chi_2),\qquad \forall \chi_1,\chi_2 \in V_h, \ t>0,\\
u_{h,1}(0) = u_{h,0,1},\qquad 
u_{h,2}(0) = u_{h,0,2},
\end{split}
\end{equation}
with initial values $u_{h,0,1}, u_{h,0,2} \in V_h$. 
Existence and uniqueness of solutions to \eqref{system5} follow from standard 
results; see \cite{ciarlet}.

\medskip

Setting $\chi_i = \Lambda_h^{\alpha} u_{h,i}(t)$, $i=1,2$, where 
$\alpha\in\mathbb{R}$ and $\Lambda_h$ is defined in Subsection~\ref{s2.2},  
we obtain the following stability estimate.

\begin{theorem}\label{est_appr_1}
Let $\alpha\in\mathbb{R}$ and let $u_{h,1},u_{h,2}$ be the solution of 
\eqref{system5} with initial data 
$(u_{h,1}(0),u_{h,2}(0))^{\mathrm{T}} 
= (u_{h,0,1},u_{h,0,2})^{\mathrm{T}}$.  
Then, for all $t\ge 0$,
\begin{equation}\label{eqn2}
\begin{split}
\|u_{h,1}(t)\|_{h,\alpha} + \|u_{h,2}(t)\|_{h,\alpha}
&\le C\Big( \|u_{h,0,1}\|_{h,\alpha} + \|u_{h,0,2}\|_{h,\alpha}  \\
&\qquad + \int_0^t 
\big( \|\mathcal{P}_h f_1(s)\|_{h,\alpha} 
    + \|\mathcal{P}_h f_2(s)\|_{h,\alpha} \big)\,ds \Big).
\end{split}
\end{equation}
\end{theorem}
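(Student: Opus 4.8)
The plan is to run a discrete energy estimate on the coupled system \eqref{system5}, exploiting the skew-symmetric coupling between $u_{h,1}$ and $u_{h,2}$, which is the finite element shadow of the unitary character of the Schr\"odinger flow. First I would recast \eqref{system5} as an ordinary differential system on the finite-dimensional space $V_h$: writing $\Lambda_h\colon V_h\to V_h$ for the discrete Laplace-type operator determined by $(\Lambda_h\psi,\chi)=(\nabla\psi,\nabla\chi)$ for all $\chi\in V_h$, and $\mathcal P_h$ for the $L^2(\mathcal O)$-orthogonal projection onto $V_h$, system \eqref{system5} is equivalent to
\begin{equation*}
\dot u_{h,1}+\Lambda_h u_{h,2}=\mathcal P_h f_1,\qquad \dot u_{h,2}-\Lambda_h u_{h,1}=\mathcal P_h f_2 .
\end{equation*}
Here $\Lambda_h$ is self-adjoint and positive definite on $V_h$, hence invertible, so $\Lambda_h^\beta$ is well defined for every $\beta\in\mathbb R$ and $\|v\|_{h,\alpha}^2=(\Lambda_h^\alpha v,v)=\|\Lambda_h^{\alpha/2}v\|^2$; moreover $(f_i,\chi)=(\mathcal P_h f_i,\chi)$ for every $\chi\in V_h$.

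Next, following the indicated choice $\chi_i=\Lambda_h^\alpha u_{h,i}(t)$, I would test the first equation of \eqref{system5} with $\Lambda_h^\alpha u_{h,1}$ and the second with $\Lambda_h^\alpha u_{h,2}$ and add the two. The diagonal terms combine into $\tfrac12\frac{d}{dt}\big(\|u_{h,1}\|_{h,\alpha}^2+\|u_{h,2}\|_{h,\alpha}^2\big)$, since $(\dot u_{h,i},\Lambda_h^\alpha u_{h,i})=\tfrac12\frac{d}{dt}(\Lambda_h^\alpha u_{h,i},u_{h,i})$ by self-adjointness of $\Lambda_h$. The two coupling terms become $(\Lambda_h^{1+\alpha}u_{h,2},u_{h,1})$ and $-(\Lambda_h^{1+\alpha}u_{h,1},u_{h,2})$, which cancel exactly because $\Lambda_h^{1+\alpha}$ is self-adjoint and the inner product is symmetric. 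This produces the energy identity
\begin{equation*}
\tfrac12\frac{d}{dt}\Big(\|u_{h,1}(t)\|_{h,\alpha}^2+\|u_{h,2}(t)\|_{h,\alpha}^2\Big)=\big(\mathcal P_h f_1(t),\Lambda_h^\alpha u_{h,1}(t)\big)+\big(\mathcal P_h f_2(t),\Lambda_h^\alpha u_{h,2}(t)\big).
\end{equation*}

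To close, set $\phi(t)=\big(\|u_{h,1}(t)\|_{h,\alpha}^2+\|u_{h,2}(t)\|_{h,\alpha}^2\big)^{1/2}$ and $g(t)=\|\mathcal P_h f_1(t)\|_{h,\alpha}+\|\mathcal P_h f_2(t)\|_{h,\alpha}$. Splitting $\Lambda_h^\alpha=\Lambda_h^{\alpha/2}\Lambda_h^{\alpha/2}$ and applying the Cauchy--Schwarz inequality to each term on the right gives $(\mathcal P_h f_i,\Lambda_h^\alpha u_{h,i})\le\|\mathcal P_h f_i\|_{h,\alpha}\|u_{h,i}\|_{h,\alpha}\le\|\mathcal P_h f_i\|_{h,\alpha}\,\phi(t)$, hence $\frac{d}{dt}\phi^2\le 2g\phi$. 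Regularizing by $\phi_\varepsilon=(\phi^2+\varepsilon^2)^{1/2}$ yields $\phi_\varepsilon'\le g$, so after integration in time and letting $\varepsilon\downarrow0$,
\begin{equation*}
\phi(t)\le\phi(0)+\int_0^t g(s)\,ds .
\end{equation*}
Finally, using $\|u_{h,1}(t)\|_{h,\alpha}+\|u_{h,2}(t)\|_{h,\alpha}\le\sqrt2\,\phi(t)$ and $\phi(0)\le\|u_{h,0,1}\|_{h,\alpha}+\|u_{h,0,2}\|_{h,\alpha}$ gives \eqref{eqn2} with $C=\sqrt2$.

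I do not expect a genuine obstacle here: the only points that need a little care are the exact cancellation of the coupling terms (this is precisely where the Hamiltonian structure of \eqref{system3} enters), the uniform treatment of all real $\alpha$ (which relies on the invertibility of $\Lambda_h$ on $V_h$), and the mild non-smoothness of $\phi$ at its zeros (handled by the $\varepsilon$-regularization), all of which are routine. An equally short alternative would be to set $w_h=u_{h,1}+iu_{h,2}$, observe that $\dot w_h-i\Lambda_h w_h=\mathcal P_h(f_1+if_2)$ and that $t\mapsto e^{it\Lambda_h}$ is a $\|\cdot\|_{h,\alpha}$-isometry on $V_h$, and apply the variation-of-constants formula together with the triangle inequality.
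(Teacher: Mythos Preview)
Your proof is correct and follows essentially the same route as the paper: test \eqref{system5} with $\chi_i=\Lambda_h^\alpha u_{h,i}$, use self-adjointness of $\Lambda_h$ to cancel the coupling terms, and then run a Cauchy--Schwarz/Gronwall-type argument on $\phi(t)=(\|u_{h,1}\|_{h,\alpha}^2+\|u_{h,2}\|_{h,\alpha}^2)^{1/2}$. Your $\varepsilon$-regularization of $\phi$ is in fact a bit more careful than the paper, which passes directly from $\frac{d}{dt}\phi^2\le 2g\phi$ to $\phi'\le g$ without comment.
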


\medskip

We now state the deterministic finite element error estimate for the nonhomogeneous 
problem. Its proof is given in Section~\ref{prf_main}.

\begin{theorem}\label{det_err1}
Let $(u_1,u_2)$ and $(u_{h,1},u_{h,2})$ be the weak solution of 
\eqref{system4} and the semidiscrete finite element solution of 
\eqref{system5}, respectively.  
Define the errors $e_i = u_{h,i} - u_i$, $i=1,2$.  
Then, for all $t\ge 0$,
\begin{equation}\label{main_est3}
\begin{split}
\|e_1(t)\|
&\le 
C\big( \|u_{h,0,1} - \mathcal{R}_h u_{0,1}\|
     + \|u_{h,0,2} - \mathcal{R}_h u_{0,2}\| \big) \\
&\qquad 
+ C h^2\left(
\int_0^t \|\dot{u}_1(s)\|_2\,ds
+ \int_0^t \|\dot{u}_2(s)\|_2\,ds
+ \|u_1(t)\|_2
\right),
\end{split}
\end{equation}
and
\begin{equation}\label{main_est4}
\begin{split}
\|e_2(t)\|
&\le 
C\big( \|u_{h,0,1} - \mathcal{R}_h u_{0,1}\|
     + \|u_{h,0,2} - \mathcal{R}_h u_{0,2}\| \big) \\
&\qquad 
+ C h^2\left(
\int_0^t \|\dot{u}_1(s)\|_2\,ds
+ \int_0^t \|\dot{u}_2(s)\|_2\,ds
+ \|u_2(t)\|_2
\right).
\end{split}
\end{equation}
\end{theorem}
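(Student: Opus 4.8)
The plan is to use the classical elliptic-projection (Ritz projection) splitting, in the form of Wheeler's technique for parabolic problems, adapted to the skew-coupled first-order system \eqref{system3}. Let $\mathcal{R}_h\colon \dot H^1\to V_h$ be the Ritz projection, $(\nabla \mathcal{R}_h w,\nabla\chi)=(\nabla w,\nabla\chi)$ for all $\chi\in V_h$, for which the standard bound
\[
\|\mathcal{R}_h w - w\| + h\,\|\nabla(\mathcal{R}_h w-w)\|\le C h^2\|w\|_2,\qquad w\in\dot H^2,
\]
holds (this is the one place convexity of $\mathcal{O}$, hence elliptic $H^2$-regularity, enters). Writing
\[
e_i = u_{h,i}-u_i = \bigl(u_{h,i}-\mathcal{R}_h u_i\bigr) + \bigl(\mathcal{R}_h u_i - u_i\bigr) =: \theta_i + \rho_i,\qquad i=1,2,
\]
the part $\rho_i$ is disposed of at once: $\|\rho_i(t)\|\le Ch^2\|u_i(t)\|_2$, and since $\mathcal{R}_h$ is a fixed linear operator it commutes with $\partial_t$, so also $\|\dot\rho_i(s)\|\le Ch^2\|\dot u_i(s)\|_2$. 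It remains to bound $\theta_i\in V_h$.

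Next I would derive the error equation for $(\theta_1,\theta_2)$. Testing \eqref{system4} against $\chi_i\in V_h\subset\dot H^1$ and subtracting from \eqref{system5}, the gradient contributions of $\rho_i$ vanish by the defining property of $\mathcal{R}_h$, namely $(\nabla\rho_2,\nabla\chi_1)=0$ and $(\nabla\rho_1,\nabla\chi_2)=0$. This leaves, for all $\chi_1,\chi_2\in V_h$ and a.e. $t>0$,
\[
(\dot\theta_1(t),\chi_1)+(\nabla\theta_2(t),\nabla\chi_1) = -(\dot\rho_1(t),\chi_1),\qquad
(\dot\theta_2(t),\chi_2)-(\nabla\theta_1(t),\nabla\chi_2) = -(\dot\rho_2(t),\chi_2),
\]
with $\theta_i(0)=u_{h,0,i}-\mathcal{R}_h u_{0,i}$. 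This is exactly system \eqref{system5} for the pair $(\theta_1,\theta_2)$ with forcing given by the functionals $\chi_i\mapsto-(\dot\rho_i,\chi_i)$ on $V_h$, i.e.\ represented by $-\mathcal{P}_h\dot\rho_i\in V_h$, and with the stated initial data.

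Then I would apply Theorem \ref{est_appr_1} with $\alpha=0$ (so that $\|\cdot\|_{h,0}=\|\cdot\|$ on $V_h$ and $\mathcal{P}_h$ is the $L^2$-projection) to this error system, obtaining
\[
\|\theta_1(t)\|+\|\theta_2(t)\| \le C\Bigl\{\|\theta_1(0)\|+\|\theta_2(0)\| + \int_0^t\bigl(\|\mathcal{P}_h\dot\rho_1(s)\|+\|\mathcal{P}_h\dot\rho_2(s)\|\bigr)\,ds\Bigr\}.
\]
Using $\|\mathcal{P}_h\dot\rho_i(s)\|\le\|\dot\rho_i(s)\|\le Ch^2\|\dot u_i(s)\|_2$ and $\|\theta_i(0)\|=\|u_{h,0,i}-\mathcal{R}_h u_{0,i}\|$, and finally adding $\|\rho_i(t)\|\le Ch^2\|u_i(t)\|_2$ through $\|e_i(t)\|\le\|\theta_i(t)\|+\|\rho_i(t)\|$, yields \eqref{main_est3}--\eqref{main_est4}.

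The routine part is the $\rho_i$ bookkeeping; the one point that deserves care is the passage through Theorem \ref{est_appr_1}. One must check that the error system really has the structure of \eqref{system5}, that it is precisely the skew-symmetry of the coupling that makes the stability constant $C$ there independent of $t$ — so that no Gronwall factor intervenes and the bound stays linear in the data and in the $L^1(0,t)$-norm of the forcing — and that the regularity hypothesis implicit in the statement, $\dot u_i\in L^1(0,t;\dot H^2)$, is what guarantees $\int_0^t\|\dot u_i(s)\|_2\,ds<\infty$.
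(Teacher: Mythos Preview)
Your proposal is correct and follows essentially the same approach as the paper: the Ritz-projection splitting $e_i=\theta_i+\rho_i$, cancellation of the gradient terms in $\rho_i$ via the defining property of $\mathcal{R}_h$, application of Theorem~\ref{est_appr_1} with $\alpha=0$ to the resulting system for $(\theta_1,\theta_2)$ with forcing $-\mathcal{P}_h\dot\rho_i$, and then the Ritz error bound \eqref{eqn10} for $\rho_i$ and $\dot\rho_i$. Your additional remarks on skew-symmetry and the implicit regularity assumption are accurate and go slightly beyond what the paper spells out.
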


In the above, $\mathcal{P}_h$ denotes the $L^2$-orthogonal projection from 
$\dot{H}^0$ onto $V_h$, $\mathcal{R}_h$ denotes the Ritz projection from 
$\dot{H}^1$ onto $V_h$, and the norms $\|\cdot\|_{h,\alpha}$ are defined in 
Subsection~\ref{s2.2}.
\subsubsection{Homogeneous System}

In this subsection, we consider the deterministic homogeneous linear 
Schr\"odinger equation and its spatial finite element approximation.  
We begin with the continuous problem
\begin{equation}\label{system6}
\dot{u}(t) + i \Delta u(t) = 0,\qquad t>0,
\qquad 
u(0)=u_0.
\end{equation}
It is well known that solutions of \eqref{system6} preserve energy in various 
Sobolev norms. In particular, differentiating the equation $r$ times in space 
yields the identity
\begin{equation}\label{eqn3}
\|D^r u_1(t)\|_{\alpha}^2 + \|D^r u_2(t)\|_{\alpha}^2
= 
\|\Lambda^r u_{0,1}\|_{\alpha}^2 + \|\Lambda^r u_{0,2}\|_{\alpha}^2,
\qquad t\ge 0,
\end{equation}
where $u_1=\mathrm{Re}(u)$ and $u_2=\mathrm{Im}(u)$, and 
$u_{0,1}=\mathrm{Re}(u_0)$, $u_{0,2}=\mathrm{Im}(u_0)$.

\medskip

Writing the homogeneous equation in real and imaginary components gives the system
\begin{equation}\label{sys_hom}
\frac{d}{dt}
\begin{bmatrix}
u_1 \\ u_2
\end{bmatrix}
=
\begin{bmatrix}
0 & -\Lambda \\
\Lambda & 0
\end{bmatrix}
\begin{bmatrix}
u_1 \\ u_2
\end{bmatrix},
\qquad t>0,
\qquad
\begin{bmatrix}
u_1(0) \\ u_2(0)
\end{bmatrix}
=
\begin{bmatrix}
u_{0,1} \\ u_{0,2}
\end{bmatrix},
\end{equation}
where $\Lambda = -\Delta$.

\medskip

System \eqref{sys_hom} may be written abstractly as
\begin{equation}\label{abs_hom}
\dot{X}(t) = A X(t),\qquad t>0,
\qquad
X(0)=X_0,
\end{equation}
where
\[
A=
\begin{bmatrix}
0 & -\Lambda \\
\Lambda & 0
\end{bmatrix},\qquad
X(t)=
\begin{bmatrix}
u_1(t) \\ u_2(t)
\end{bmatrix},\qquad
X_0=
\begin{bmatrix}
u_{0,1} \\ u_{0,2}
\end{bmatrix}.
\]

In this framework, the weak solution of \eqref{abs_hom} is given by
\begin{equation}
X(t) = E(t) X_0,\qquad t\ge 0,
\end{equation}
where the semigroup $\{E(t)\}_{t\ge 0}$ is generated by $(A,D(A)$ in $H^{\alpha}$  and satisfies
\[
E(t) = e^{tA}
=
\begin{bmatrix}
C(t) & -S(t)\\
S(t) & C(t)
\end{bmatrix},\qquad 
C(t)=\cos(t\Lambda),\quad S(t)=\sin(t\Lambda).
\]
Further details are provided in Subsection~\ref{s2.1}.

\medskip

The corresponding finite element approximation of \eqref{abs_hom} is defined by
\begin{equation}\label{homapp_eqn}
\dot{X}_h(t) = A_h X_h(t),\qquad t>0,
\qquad
X_h(0)=X_{h,0},
\end{equation}
where
\[
A_h=
\begin{bmatrix}
0 & -\Lambda_h\\
\Lambda_h & 0
\end{bmatrix},\qquad
X_h(t)=
\begin{bmatrix}
u_{h,1}(t)\\
u_{h,2}(t)
\end{bmatrix},\qquad
X_{h,0}=
\begin{bmatrix}
u_{h,0,1}\\
u_{h,0,2}
\end{bmatrix}.
\]

The finite-dimensional operator $A_h$ generates a $C_0$-semigroup 
$\{E_h(t)\}_{t\ge 0}$ on $V_h\times V_h$, given by
\[
E_h(t)=e^{tA_h}
=
\begin{bmatrix}
C_h(t) & -S_h(t)\\
S_h(t) & C_h(t)
\end{bmatrix},\qquad 
C_h(t)=\cos(t\Lambda_h),\qquad S_h(t)=\sin(t\Lambda_h).
\]
For example, using the orthonormal eigenpairs 
$\{(\lambda_{h,j},\phi_{h,j})\}_{j=1}^{N_h}$ of $\Lambda_h$ 
(with $N_h=\dim(V_h)$), we have for $v_h\in V_h$,
\[
C_h(t)v_h
=
\cos(t\Lambda_h)v_h
=
\sum_{j=1}^{N_h}
\cos(t\lambda_{h,j})(v_h,\phi_{h,j})\,\phi_{h,j},\qquad t\ge 0.
\]

The following estimate for the homogeneous problem will be used later in the 
analysis of the stochastic equation (see Theorem~\ref{stoch_err1}).
\begin{theorem}\label{det_err2}
Let $\beta\in[0,4]$ and $X_0=(u_{0,1},u_{0,2})^{\mathrm{T}}\in H^\beta$.  
For $t\ge 0$, define the linear operators $F_h(t),G_h(t):H^\beta \to \dot{H}^0$ by
\begin{align*}
F_h(t)X_0 
&:= \big(C_h(t)\mathcal{P}_h - C(t)\big)u_{0,1}
    - \big(S_h(t)\mathcal{P}_h - S(t)\big)u_{0,2},\\[1mm]
G_h(t)X_0
&:= \big(S_h(t)\mathcal{P}_h - S(t)\big)u_{0,1}
    + \big(C_h(t)\mathcal{P}_h - C(t)\big)u_{0,2}.
\end{align*}
Then there exists a constant $C=C(t,\mathcal{O})>0$ such that
\begin{align}
\|F_h(t)X_0\| 
&\le C\, h^{\frac{\beta}{2}}\,|||X_0|||_{\beta}, 
\qquad \beta\in[0,4], \label{main_est5}\\[2mm]
\|G_h(t)X_0\| 
&\le C\, h^{\frac{\beta}{2}}\,|||X_0|||_{\beta}, 
\qquad \beta\in[0,4]. \label{main_est6}
\end{align}
\end{theorem}

	\subsection{\bf Stochastic Version}

We now consider the stochastic linear Schr\"odinger equation \eqref{eqn1} within the 
semigroup framework. As before, we express the equation in real and imaginary 
components. Writing $u=u_1+i u_2$ and $W=(W_1,W_2)^{\mathrm{T}}$, equation 
\eqref{eqn1} is equivalent to the system
\begin{equation}\label{sys_nonhom}
d
\begin{bmatrix}
u_1\\
u_2
\end{bmatrix}
=
\begin{bmatrix}
0 & -\Lambda\\
\Lambda & 0
\end{bmatrix}
\begin{bmatrix}
u_1\\
u_2
\end{bmatrix}\,dt
+
\begin{bmatrix}
dW_1\\
dW_2
\end{bmatrix},
\qquad t>0,
\qquad
\begin{bmatrix}
u_1(0)\\
u_2(0)
\end{bmatrix}
=
\begin{bmatrix}
u_{0,1}\\
u_{0,2}
\end{bmatrix}.
\end{equation}

This may be written abstractly as the stochastic evolution equation
\begin{equation}\label{system1}
dX(t) = A X(t)\,dt + dW(t),\qquad t>0,\qquad X(0)=X_0,
\end{equation}
where
\[
X(t)=
\begin{bmatrix}
u_1(t)\\
u_2(t)
\end{bmatrix},
\qquad 
X_0=
\begin{bmatrix}
u_{0,1}\\
u_{0,2}
\end{bmatrix},
\qquad
dW(t)=
\begin{bmatrix}
dW_1(t)\\
dW_2(t)
\end{bmatrix}.
\]

The mild (weak) solution of \eqref{system1} is given by
\begin{equation}\label{soleqn1}
X(t)
=
E(t)X_0
+
\int_0^t E(t-s)\,dW(s),
\qquad t\ge 0,
\end{equation}
where $E(t)=e^{tA}$ is the semigroup defined in the previous subsection.

It is well known (see, e.g., \cite{pgrisvard}) that the solution satisfies the moment 
estimate
\begin{equation}\label{est1}
\|X(t)\|_{L^2(\Omega;H^\beta)}
\le
C\Big(
|||X_0|||_{L^2(\Omega;H^\beta)}
+
t^{1/2}
\big(
\|\Lambda^{\beta/2}Q_1^{1/2}\|_{HS}
+
\|\Lambda^{\beta/2}Q_2^{1/2}\|_{HS}
\big)
\Big),
\qquad t\ge 0.
\end{equation}

\medskip

Next we derive the finite element approximation of the stochastic system 
\eqref{system1}.  
Using a standard piecewise linear finite element space $V_h$, the spatially discrete 
analogue of \eqref{system1} is to find $X_h(t)=(u_{h,1}(t),u_{h,2}(t))^{\mathrm{T}}
\in V_h\times V_h$ such that
\begin{equation}\label{eqn4}
dX_h(t)
=
A_h X_h(t)\,dt
+
\mathcal{P}_h\, dW(t),
\qquad t>0,
\qquad X_h(0)=X_{0,h},
\end{equation}
where $A_h$ is the discrete operator defined earlier and $\mathcal{P}_h$ denotes 
the $L^2$-projection onto $V_h$.

The unique mild solution of \eqref{eqn4} is
\begin{equation}\label{eqn5}
X_h(t)
=
E_h(t)X_{0,h}
+
\int_0^t E_h(t-s)\mathcal{P}_h\, dW(s),
\qquad t\ge 0.
\end{equation}

\begin{theorem}\label{stoch_err1}
Let $\beta\in[0,4]$ and assume that the covariance operators $Q_i$, $i=1,2$, satisfy
\[
\|\Lambda^{\beta/2}Q_1^{1/2}\|_{HS}
+
\|\Lambda^{\beta/2}Q_2^{1/2}\|_{HS}
<\infty.
\]
Let $X(t)=(u_1(t),u_2(t))^{\mathrm{T}}$ and 
$X_h(t)=(u_{h,1}(t),u_{h,2}(t))^{\mathrm{T}}$ be the solutions of 
\eqref{soleqn1} and \eqref{eqn5}, respectively, with 
\[
X_{0,h}
=
(\mathcal{P}_h u_{0,1},\mathcal{P}_h u_{0,2})^{\mathrm{T}}.
\]
Then, for all $t\ge 0$,
\begin{equation}\label{system8}
\begin{split}
\|u_{h,1}(t)-u_1(t)\|_{L^2(\Omega;\dot{H}^0)}
&\le 
C_t h^{\frac{\beta}{2}}
\Big(
\|X_0\|_{L^2(\Omega;H^\beta)}
+
\|\Lambda^{\beta/2}Q_1^{1/2}\|_{HS}
+
\|\Lambda^{\beta/2}Q_2^{1/2}\|_{HS}
\Big),\\[2mm]
\|u_{h,2}(t)-u_2(t)\|_{L^2(\Omega;\dot{H}^0)}
&\le 
C_t h^{\frac{\beta}{2}}
\Big(
\|X_0\|_{L^2(\Omega;H^\beta)}
+
\|\Lambda^{\beta/2}Q_1^{1/2}\|_{HS}
+
\|\Lambda^{\beta/2}Q_2^{1/2}\|_{HS}
\Big),
\end{split}
\end{equation}
where $C_t$ is a positive function increasing in $t$.
\end{theorem}

The proof of \eqref{system8} relies on the It\^o isometry together with the 
deterministic estimates \eqref{main_est5}--\eqref{main_est6} of Theorem~\ref{det_err2}.
\section{Preliminaries: Notation and Mild Solution Framework}\label{s2}

Let $(U,(\cdot,\cdot)_U)$ and $(H,(\cdot,\cdot)_H)$ be separable Hilbert spaces with 
corresponding norms $\|\cdot\|_U$ and $\|\cdot\|_H$.  
We denote by $\mathcal{L}(U,H)$ the space of bounded linear operators from $U$ to $H$, 
and by $\mathcal{L}_2(U,H)$ the space of Hilbert--Schmidt operators equipped with the 
norm
\[
\|T\|_{\mathcal{L}_2(U,H)}^2 := \sum_{j=1}^{\infty}\|Te_j\|_H^2 < \infty,
\]
where $\{e_j\}_{j=1}^{\infty}$ is an orthonormal basis of $U$.  
In the case $U=H$ we write $\mathcal{L}(U)=\mathcal{L}(U,U)$ and 
$HS:=\mathcal{L}_2(U,U)$.

It is well known that if $S\in\mathcal{L}(U)$ and $T\in\mathcal{L}_2(U,H)$, then 
$TS\in\mathcal{L}_2(U,H)$ and the inequality
\[
\|TS\|_{\mathcal{L}_2(U,H)}
\le 
\|T\|_{\mathcal{L}_2(U,H)}\,\|S\|_{\mathcal{L}(U)}
\]
holds.

\medskip

Let $(\Omega,\mathcal{F},P, \{\mathcal{F}_t\}_{t \geq 0})$ be a filtered probability space.  
We denote by $L^2(\Omega,H)$ the space of $H$-valued square-integrable random 
variables, endowed with the norm
\[
\|v\|_{L^2(\Omega,H)}
:= 
\big(\mathbb{E}\|v\|_H^2\big)^{1/2}
=
\left(\int_{\Omega}\|v(\omega)\|_H^2\,dP(\omega)\right)^{1/2}.
\]

Let $Q\in\mathcal{L}(U)$ be a self-adjoint, positive semidefinite operator with 
finite trace $\operatorname{Tr}(Q)$.  
\begin{definition}

A $U$-valued process $\{W(t)\}_{t\ge 0}$ is called a $Q$-Wiener process with respect 
to a filtration $\{\mathcal{F}_t\}_{t\ge 0}$ if

\begin{enumerate}[(i)]
\item $W(0)=0$ almost surely;
\item $W$ has almost surely continuous trajectories;
\item $W$ has independent increments;
\item $W(t)-W(s)$ is a $U$-valued Gaussian random variable with mean zero and 
covariance operator $(t-s)Q$, for $0\le s\le t$;
\item $W(t)$ is $\mathcal{F}_t$-measurable for each $t\ge 0$;
\item $W(t)-W(s)$ is independent of $\mathcal{F}_s$ for each $0\le s\le t$.
\end{enumerate}
\end{definition}
It is known (see, e.g., \cite{rockner}) that for any $Q$-Wiener process satisfying 
(i)–(iv) one can choose a filtration satisfying the usual conditions so that (v)–(vi) 
also hold.  
Moreover, $W(t)$ possesses the orthogonal expansion
\begin{equation}\label{eqn7}
W(t)
=
\sum_{j=1}^{\infty} \gamma_j^{1/2}\,\beta_j(t)\,e_j,
\end{equation}
where $\{(\gamma_j,e_j)\}_{j=1}^{\infty}$ are the eigenpairs of $Q$ and 
$\{\beta_j\}_{j=1}^{\infty}$ are independent standard real-valued Brownian motions.
The series in \eqref{eqn7} converges in $L^2(\Omega,U)$ since, for $t\ge 0$,
\[
\|W(t)\|_{L^2(\Omega,U)}^2
=
\mathbb{E}\Big\|
\sum_{j=1}^{\infty}\gamma_j^{1/2}e_j\beta_j(t)
\Big\|_U^2
=
t\sum_{j=1}^{\infty}\gamma_j
=
t\,\operatorname{Tr}(Q).
\]

\medskip

We require only the special case of the It\^o integral in which the integrand is 
deterministic.  
If a function $\Phi:[0,\infty)\to \mathcal{L}(U,H)$ is strongly measurable and
\begin{equation}\label{eqn8}
\int_0^t \|\Phi(s)Q^{1/2}\|_{\mathcal{L}_2(U,H)}^2\,ds <\infty,
\end{equation}
then the stochastic integral $\int_0^t\Phi(s)\,dW(s)$ is well defined, and It\^o's 
isometry holds:
\begin{equation}\label{eqn9}
\left\|\int_0^t \Phi(s)\,dW(s)\right\|_{L^2(\Omega,H)}^2
=
\int_0^t 
\|\Phi(s)Q^{1/2}\|_{\mathcal{L}_2(U,H)}^2\,ds.
\end{equation}

More generally, if $Q\in\mathcal{L}(U)$ is a self-adjoint, positive semidefinite 
operator with eigenpairs $\{(\gamma_j,e_j)\}_{j=1}^{\infty}$ but is not trace class, 
i.e., $\operatorname{Tr}(Q)=\infty$, then the series \eqref{eqn7} does not converge 
in $L^2(\Omega,U)$.  
However, it converges in a suitably enlarged Hilbert space, and the stochastic 
integral $\int_0^t \Phi(s)\,dW(s)$ remains well defined provided \eqref{eqn8} holds.  
In this case, $W$ is referred to as a \emph{cylindrical Wiener process} 
(see \cite{prato}).  
An important example is the case $Q=I$, the identity operator.

\medskip

We now consider the abstract stochastic differential equation
\begin{equation}\label{sdeqn1}
dX(t) = A X(t)\,dt + dW(t),\qquad t>0,\qquad X(0)=X_0,
\end{equation}
under the following assumptions:
\begin{enumerate}[(i)]
\item $A:D(A)\subset H\to H$ is the generator of a strongly continuous 
($C_0$-) semigroup $\{E(t)\}_{t\ge 0}$ on $H$;
\item $X_0$ is an $\mathcal{F}_0$-measurable $H$-valued random variable.
\end{enumerate}

\begin{definition}[\cite{prato}, Weak Solution]
An $H$-valued predictable process $\{X(t)\}_{t\ge 0}$ is said to be a weak solution of 
\eqref{sdeqn1} if its trajectories are almost surely Bochner integrable and, for every 
$\eta\in D(A^\ast)$ and all $t\ge 0$,
\begin{equation}
(X(t),\eta)
=
(X_0,\eta)
+
\int_0^t (X(s),A^\ast \eta)\,ds
+
\int_0^t (dW(s),\eta),
\qquad P\text{-a.s.},
\end{equation}
where $(A^\ast,D(A^\ast))$ denotes the adjoint of $(A,D(A))$ in $H$.
\end{definition}

\subsection{Abstract Framework and Regularity}\label{s2.1}

Let $\Lambda=-\Delta$ denote the Laplace operator with domain 
$D(\Lambda)=H^{2}(\mathcal{O})\cap H^{1}_{0}(\mathcal{O})$.  
We set $U=L^{2}(\mathcal{O})$ equipped with the usual inner product $(\cdot,\cdot)$ and 
norm $\|\cdot\|$.  
To describe spatial regularity, we introduce the scale of Hilbert spaces
\[
\dot{H}^{\alpha} := D(\Lambda^{\alpha/2}),\qquad 
\|v\|_{\alpha} := \|\Lambda^{\alpha/2}v\|
             = \Big( \sum_{j=1}^{\infty}\lambda_j^{\alpha}(v,\phi_j)^2 \Big)^{1/2},
\qquad \alpha\in\mathbb{R},\ v\in\dot{H}^{\alpha},
\]
where $\{(\lambda_j,\phi_j)\}_{j=1}^{\infty}$ are the eigenpairs of $\Lambda$ with 
orthonormal eigenfunctions.  
If $\alpha\ge \beta$, then $\dot{H}^{\alpha}\subset \dot{H}^{\beta}$.  
It is well known (see \cite{Thomee}) that
\[
\dot{H}^{0} = U,\qquad 
\dot{H}^{1} = H^{1}_{0}(\mathcal{O}),\qquad 
\dot{H}^{2} = D(\Lambda) = H^{2}(\mathcal{O})\cap H^{1}_{0}(\mathcal{O}),
\]
with equivalent norms, and that for $\beta>0$, the dual space satisfies 
$\dot{H}^{-\beta} = (\dot{H}^{\beta})^{\ast}$.  
The inner product on $\dot{H}^{1}$ is 
\[
(v,w)_{1} := (\nabla v,\nabla w).
\]

\medskip
We also introduce the product spaces
\[
H^{\alpha} := \dot{H}^{\alpha}\times \dot{H}^{\alpha}, \qquad 
|||v|||_{\alpha}^{2} := \|v_{1}\|_{\alpha}^{2} + \|v_{2}\|_{\alpha}^{2},
\qquad \alpha\in\mathbb{R},
\]
and write $H = H^{0} = U\times U$ with norm $|||\cdot|||=|||\cdot|||_0$.

\medskip

For $\alpha\in[-1,0]$, define the operator $A:D(A)\subset H^{\alpha}\to H^{\alpha}$ by
\[
A 
:= 
\begin{bmatrix}
0 & -\Lambda\\[1mm]
\Lambda & 0
\end{bmatrix}, 
\qquad
D(A)
=
\left\{
x=(x_{1},x_{2})^{\mathrm{T}}\in H^{\alpha} 
:
Ax=
\begin{bmatrix}
-\Lambda x_{2}\\[1mm]
\Lambda x_{1}
\end{bmatrix}
\in H^{\alpha}
\right\}
=
H^{\alpha+2},
\]
i.e., $D(A)=\dot{H}^{\alpha+2}\times \dot{H}^{\alpha+2}$.

The operator $A$ generates a unitary $C_{0}$-group $\{E(t)\}_{t\in\mathbb{R}}$ on 
$H^{\alpha}$, given explicitly by
\begin{equation}\label{eqn6}
E(t)
=
\begin{bmatrix}
C(t) & -S(t)\\[1mm]
S(t) & C(t)
\end{bmatrix},
\qquad t\in\mathbb{R},
\end{equation}
where
\[
C(t) := \cos(t\Lambda), \qquad S(t):= \sin(t\Lambda)
\]
are the cosine and sine operators.  
Using the eigenpairs $\{(\lambda_j,\phi_j)\}_{j=1}^{\infty}$ of $\Lambda$, these operators have 
the representations (for $v\in\dot{H}^{\alpha}$ and $t\ge 0$)
\begin{align*}
C(t)v &= \cos(t\Lambda)v 
      = \sum_{j=1}^{\infty}\cos(t\lambda_j)\,(v,\phi_j)\,\phi_j,\\
S(t)v &= \sin(t\Lambda)v
      = \sum_{j=1}^{\infty}\sin(t\lambda_j)\,(v,\phi_j)\,\phi_j.
\end{align*}

	\subsection{Finite Element Approximations}\label{s2.2}

Let $\mathcal{T}_h$ be a regular family of triangulations of $\mathcal{O}$, and let  
$h_K = \operatorname{diam}(K)$ for each element $K\in\mathcal{T}_h$, with  
$h := \max_{K\in\mathcal{T}_h} h_K$.  
We denote by $V_h$ the standard space of continuous, piecewise linear functions on  
$\mathcal{T}_h$ that vanish on $\partial\mathcal{O}$.  
Thus $V_h \subset H_0^{1}(\mathcal{O}) = \dot{H}^{1}$.

The assumption that $\mathcal{O}$ is convex and polygonal ensures that the mesh can be  
fitted exactly to $\partial\mathcal{O}$ and that the elliptic regularity estimate  
\[
\|v\|_{H^{2}(\mathcal{O})} \le C\|\Lambda v\|, \qquad v\in D(\Lambda),
\]
holds (see \cite{pgrisvard}).  
We recall several standard results from the finite element theory  
\cite{Brenner,ciarlet}.  
Throughout this subsection, norms of the form $\|\cdot\|_{s}$ refer to  
$\dot{H}^{s}$-norms.

\medskip

Let  
\[
\mathcal{P}_h : \dot{H}^{0} \to V_h,
\qquad
\mathcal{R}_h : \dot{H}^{1} \to V_h,
\]
denote the $L^{2}$-orthogonal projection and the Ritz projection, respectively, defined by
\[
(\mathcal{P}_h v,\chi) = (v,\chi), \qquad
(\nabla \mathcal{R}_h v, \nabla \chi) = (\nabla v, \nabla \chi),
\qquad \forall\, \chi\in V_h.
\]
Then the following approximation properties hold:
\begin{align}
\|(\mathcal{R}_h - I)v\|_{r}
&\le C h^{\,s-r} \|v\|_{s}, 
\qquad r=0,1,\ \ s=1,2,\ \ v\in\dot{H}^{s}, \label{eqn10} \\[1mm]
\|(\mathcal{P}_h - I)v\|_{r}
&\le C h^{\,s-r} \|v\|_{s}, 
\qquad r=-1,0,\ \ s=1,2,\ \ v\in\dot{H}^{s}. \label{eqn11}
\end{align}

\medskip

We now introduce the discrete norms corresponding to $\|\cdot\|_{\alpha}$:
\[
\|v_h\|_{h,\alpha} := \|\Lambda_h^{\alpha/2} v_h\|,
\qquad v_h\in V_h,\quad \alpha\in\mathbb{R},
\]
where $\Lambda_h : V_h \to V_h$ is the discrete Laplacian defined by
\[
(\Lambda_h v_h,\chi) = (\nabla v_h,\nabla \chi),
\qquad \forall\, \chi\in V_h.
\]

\section{Proofs of the Main Results}\label{prf_main}

In this section, we provide the proofs of the results stated in Section~\ref{main_results}.

\begin{proof}[\bf Proof of Theorem \ref{est_appr_1}]
We take the test functions 
\[
\chi_i = \Lambda_h^{\alpha} u_{h,i}(t), \qquad i=1,2,
\]
in \eqref{system5}. Then, for $t \ge 0$,
\[
\begin{split}
(\dot u_{h,1}(t), \Lambda_h^\alpha u_{h,1}(t))
+ (\nabla u_{h,2}(t), \nabla \Lambda_h^\alpha u_{h,1}(t))
&= (f_1(t), \Lambda_h^\alpha u_{h,1}(t)),\\
(\dot u_{h,2}(t), \Lambda_h^\alpha u_{h,2}(t))
- (\nabla u_{h,1}(t), \nabla \Lambda_h^\alpha u_{h,2}(t))
&= (f_2(t), \Lambda_h^\alpha u_{h,2}(t)).
\end{split}
\]

Adding the two identities and using the definition of $\mathcal{P}_h$ yields
\[
\begin{aligned}
(\dot u_{h,1}(t), \Lambda_h^\alpha u_{h,1}(t))
+(\dot u_{h,2}(t), \Lambda_h^\alpha u_{h,2}(t))
&=
(\mathcal{P}_h f_1(t), \Lambda_h^\alpha u_{h,1}(t))
+
(\mathcal{P}_h f_2(t), \Lambda_h^\alpha u_{h,2}(t)).
\end{aligned}
\]

Here we used the symmetry of $\Lambda_h$, which implies
\begin{align*}
	(\nabla u_{h,2}(t),\nabla \Lambda_h^{\alpha}u_{h,1}(t))
	& =(\Lambda_h u_{h,2}(t), \Lambda_h^{\alpha}u_{h,1}(t)),\\
	& =(\Lambda_h^\alpha u_{h,2}(t), \Lambda_h u_{h,1}(t)) ,\\
	& =(\Lambda_h u_{h,1}(t), \Lambda_h^\alpha u_{h,2}(t)),\\
	&= (\nabla u_{h,1}(t),\nabla \Lambda_h^{\alpha}u_{h,2}(t)).
\end{align*}
Using the symmetric property of the operator $\Lambda_h$ and Cauchy-Schwarz inequality,  we get
\[
\begin{split}
    &\frac{d}{dt}\left[\|\Lambda_h^{\alpha/2}u_{h,1}(t)\|^2 + \|\Lambda_h^{\alpha/2}u_{h,2}(t)\|^2\right] \\
    &\qquad=2(\Lambda_h^{\alpha/2}\mathcal{P}_hf_1(t),\Lambda_h^{\alpha/2}u_{h,1}(t))+2(\Lambda_h^{\alpha/2} \mathcal{P}_hf_2(t),\Lambda_h^{\alpha/2}u_{h,2}(t)),\\
    & \qquad\leq 2\|\Lambda_h^{\alpha/2}\mathcal{P}_hf_1(t)\|\|\Lambda_h^{\alpha/2}u_{h,1}(t)\|+2\|\Lambda_h^{\alpha/2}\mathcal{P}_hf_2(t)\|\|\Lambda_h^{\alpha/2}u_{h,2}(t)\|.
\end{split}
\]
Using Cauchy-Schwarz inequality (in $\mathbb{R}^2$), we can write
\[
\begin{split}
	& \frac{d}{dt}\Big(\|u_{h,1}(t)\|^2_{h,\alpha}+\|u_{h,1}(t)\|^2_{h,\alpha}\Big) \\
	& \qquad\leq 2\Big(\|\mathcal{P}_hf_1(t)\|_{h,\alpha}\|u_{h,1}(t)\|_{h,\alpha} 
	+\|\mathcal{P}_hf_2(t)\|_{h,\alpha}\|u_{h,2}(t)\|_{h,\alpha}\Big),\\
	& \qquad\leq 2\Big(\|\mathcal{P}_hf_1(t)\|_{h,\alpha}^2 
	+\|\mathcal{P}_hf_2(t)\|_{h,\alpha}^2\Big)^\frac{1}{2} \Big(\|u_{h,1}(t)\|^2_{h,\alpha}+\|u_{h,1}(t)\|^2_{h,\alpha}\Big)^\frac{1}{2},\\
	& \qquad\leq 2\Big(\|\mathcal{P}_hf_1(t)\|_{h,\alpha} 
	+\|\mathcal{P}_hf_2(t)\|_{h,\alpha}\Big) \Big(\|u_{h,1}(t)\|^2_{h,\alpha}+\|u_{h,1}(t)\|^2_{h,\alpha}\Big)^\frac{1}{2}.
\end{split}
\]

Define
\[
g(t) := \Big(\|u_{h,1}(t)\|_{h,\alpha}^2 + \|u_{h,2}(t)\|_{h,\alpha}^2\Big)^{1/2}.
\]
Then the previous inequality becomes
\[
\frac{d}{dt} g^2(t)
\le
2\big(\|\mathcal{P}_h f_1(t)\|_{h,\alpha}
+
\|\mathcal{P}_h f_2(t)\|_{h,\alpha}\big) g(t),
\]
which implies
\[
\frac{dg}{dt}(t)
\le
\|\mathcal{P}_h f_1(t)\|_{h,\alpha}
+
\|\mathcal{P}_h f_2(t)\|_{h,\alpha}.
\]

Integrating over $[0,t]$ gives
\[
g(t)
\le
g(0)
+
\int_0^{t}
\big(\|\mathcal{P}_h f_1(s)\|_{h,\alpha}
+
\|\mathcal{P}_h f_2(s)\|_{h,\alpha}\big)\,ds,
\]
where
\[
g(0) = \Big(\|u_{h,0,1}\|_{h,\alpha}^2 + \|u_{h,0,2}\|_{h,\alpha}^2\Big)^{1/2}.
\]

Therefore, for all $t \ge 0$,
\[
\begin{split}
\|u_{h,1}(t)\|_{h,\alpha} + \|u_{h,2}(t)\|_{h,\alpha}
\le
C\Big(
\|u_{h,0,1}\|_{h,\alpha}
+
\|u_{h,0,2}\|_{h,\alpha}
+
\int_0^t
(\|\mathcal{P}_h f_1(s)\|_{h,\alpha}
+
\|\mathcal{P}_h f_2(s)\|_{h,\alpha})\,ds
\Big),
\end{split}
\]
which completes the proof.
\end{proof}
We now establish the error estimate for the deterministic non-homogeneous problem.
\begin{proof}[\bf Proof of Theorem \ref{det_err1}]
We decompose the error as
\[
e_i := u_{h,i} - u_i = (u_{h,i} - \mathcal{R}_h u_i) + (\mathcal{R}_h u_i - u_i)
=:\theta_i + \rho_i, \qquad i=1,2.
\]

Subtracting \eqref{system4} from \eqref{system5}, we obtain for all 
$\chi_1,\chi_2 \in V_h$,
\[
\begin{split}
(\dot{u}_{h,1}(t) - \dot{u}_1(t), \chi_1)
+ (\nabla (u_{h,2}(t)-u_2(t)), \nabla \chi_1) &= 0,\\
(\dot{u}_{h,2}(t) - \dot{u}_2(t), \chi_2)
-(\nabla (u_{h,1}(t)-u_1(t)), \nabla \chi_2) &= 0,
\qquad t>0.
\end{split}
\]

With $e_i = \theta_i + \rho_i$, this becomes
\[
\begin{split}
(\dot{e}_1(t),\chi_1) + (\nabla e_2(t),\nabla \chi_1) &= 0,\\
(\dot{e}_2(t),\chi_2) - (\nabla e_1(t),\nabla \chi_2) &= 0.
\end{split}
\]

Expanding using $\theta_i$ and $\rho_i$ yields
\[
\begin{split}
(\dot{\theta}_1(t),\chi_1) + (\nabla \theta_2(t),\nabla\chi_1)
&= -(\dot{\rho}_1(t),\chi_1) - (\nabla \rho_2(t),\nabla\chi_1),\\
(\dot{\theta}_2(t),\chi_2) - (\nabla \theta_1(t),\nabla\chi_2)
&= (\nabla \rho_1(t),\nabla\chi_2) - (\dot{\rho}_2(t),\chi_2).
\end{split}
\]

By definition of the Ritz projector $\mathcal{R}_h$,  
\[
(\nabla \rho_2, \nabla \chi_1) = 0,
\qquad 
(\nabla \rho_1, \nabla \chi_2) = 0,
\]
so that
\begin{equation}\label{theta-eq}
\begin{split}
(\dot{\theta}_1(t),\chi_1) + (\nabla \theta_2(t),\nabla\chi_1)
&= -(\dot{\rho}_1(t),\chi_1),\\
(\dot{\theta}_2(t),\chi_2) - (\nabla \theta_1(t),\nabla\chi_2)
&= -(\dot{\rho}_2(t),\chi_2).
\end{split}
\end{equation}

Thus $(\theta_1,\theta_2)$ solves \eqref{system5} with  
$f_1 = -\dot{\rho}_1$ and $f_2 = -\dot{\rho}_2$.
Applying estimate \eqref{eqn2} with $\alpha=0$, we obtain
\begin{equation}\label{theta-est}
\begin{split}
\|\theta_1(t)\|_{h,0} + \|\theta_2(t)\|_{h,0}
\le C\Big(
&\|\theta_1(0)\|_{h,0} + \|\theta_2(0)\|_{h,0} \\
&+ \int_0^t \|\mathcal{P}_h\dot{\rho}_1(s)\|_{h,0}\,ds
+ \int_0^t \|\mathcal{P}_h\dot{\rho}_2(s)\|_{h,0}\,ds
\Big).
\end{split}
\end{equation}

Since $\theta_i(t)\in V_h$, we have $\|\theta_i(t)\|=\|\theta_i(t)\|_{h,0}$.
Therefore,
\[
\|e_1(t)\|
\le \|\theta_1(t)\|_{h,0} + \|\rho_1(t)\|.
\]
Using \eqref{theta-est},
\begin{equation}\label{err1-temp}
\begin{split}
\|e_1(t)\|
\le C\Big(
&\|\theta_1(0)\|_{h,0} + \|\theta_2(0)\|_{h,0}
+ \int_0^t \|\mathcal{P}_h\dot{\rho}_1(s)\|\,ds
+ \int_0^t \|\mathcal{P}_h\dot{\rho}_2(s)\|\,ds
+ \|\rho_1(t)\|
\Big).
\end{split}
\end{equation}

Using \eqref{eqn10} with $r=0$, $s=2$, we obtain for $i=1,2$,
\[
\|(\mathcal{R}_h - I)\dot u_i(s)\| \le C h^2 \|\dot u_i(s)\|_2,
\qquad
\|(\mathcal{R}_h - I)u_i(t)\| \le C h^2 \|u_i(t)\|_2.
\]

Also,
\[
\|\theta_i(0)\|_{h,0}
= \|u_{h,0,i} - \mathcal{R}_h u_{0,i}\|.
\]

Substituting into \eqref{err1-temp} yields
\[
\begin{split}
\|e_1(t)\|
\le C\Big(
&\|u_{h,0,1} - \mathcal{R}_h u_{0,1}\|
+ \|u_{h,0,2} - \mathcal{R}_h u_{0,2}\| \\
&+ h^2 \int_0^t \|\dot u_1(s)\|_2\,ds
+ h^2 \int_0^t \|\dot u_2(s)\|_2\,ds
+ h^2 \|u_1(t)\|_2
\Big).
\end{split}
\]

The estimate for $e_2(t)$ is analogous, leading to
\[
\begin{split}
\|e_2(t)\|
\le C\Big(
&\|u_{h,0,1} - \mathcal{R}_h u_{0,1}\|
+ \|u_{h,0,2} - \mathcal{R}_h u_{0,2}\| \\
&+ h^2 \int_0^t \|\dot u_1(s)\|_2\,ds
+ h^2 \int_0^t \|\dot u_2(s)\|_2\,ds
+ h^2 \|u_2(t)\|_2
\Big).
\end{split}
\]

This completes the proof.
\end{proof}

	We now establish the error bounds \eqref{main_est5}–\eqref{main_est6} for the 
finite element approximation of the deterministic homogeneous linear Schrödinger 
equation.

\begin{proof}[\bf Proof of Theorem \ref{det_err2}]
We first consider the case $\beta = 0$.  
Recall that
\[
F_h(t)X_0 = u_{h,1}(t) - u_1(t), 
\qquad 
G_h(t)X_0 = u_{h,2}(t) - u_2(t),
\]
where $(u_1,u_2)^{\mathrm{T}}$ solves \eqref{abs_hom} with initial data 
$X_0=(u_{0,1},u_{0,2})^{\mathrm{T}}$, and $(u_{h,1},u_{h,2})^{\mathrm{T}}$ solves \eqref{homapp_eqn} 
with initial values 
$(u_{h,0,1},u_{h,0,2})^{\mathrm{T}} = (\mathcal{P}_h u_{0,1}, \mathcal{P}_h u_{0,2})^{\mathrm{T}}$.

Using the energy identity \eqref{eqn3} for the continuous solution (with $\alpha=0$) 
and the stability estimate \eqref{eqn2} for the discrete system, we obtain
\[
\begin{aligned}
\|F_h(t)X_0\|
&\le \|u_{h,1}(t)\| + \|u_1(t)\| \\
&\le C\left(
\|u_{h,0,1}\| + \|u_{h,0,2}\| 
+ \|u_{0,1}\| + \|u_{0,2}\|
\right)\\
&= C\left(
\|\mathcal{P}_h u_{0,1}\| + \|\mathcal{P}_h u_{0,2}\| 
+ \|u_{0,1}\| + \|u_{0,2}\|
\right)\\
&\le C(|||X_0|||_0).
\end{aligned}
\]
This proves \eqref{main_est5} for $\beta=0$.  
The same argument applies to $G_h(t)X_0$.

To estimate the error for $\beta=4$, we use Theorem~\ref{det_err1} with initial data  
$(u_{h,0,1},u_{h,0,2}) = (\mathcal{P}_h u_{0,1}, \mathcal{P}_h u_{0,2})$.
Then
\[
F_h(t)X_0 = e_1(t) = u_{h,1}(t) - u_1(t).
\]
Using estimate \eqref{main_est3},
\[
\begin{aligned}
\|F_h(t)X_0\|
\le {}
&\|\mathcal{P}_h(I-\mathcal{R}_h)u_{0,1}\|
+ \|\mathcal{P}_h(I-\mathcal{R}_h)u_{0,2}\| \\
&\quad
+ C h^2\left(
\int_0^t \|\dot u_1(s)\|_2\,ds
+ \int_0^t \|\dot u_2(s)\|_2\,ds
+ \|u_1(t)\|_2
\right).
\end{aligned}
\]

Using the projection estimate \eqref{eqn10} and the homogeneous energy identity 
\eqref{eqn3}, we have
\[
\|(\mathcal{R}_h - I)u_{0,i}\| \le Ch^2 \|u_{0,i}\|_2,\quad \text{ for }i=1,2.
\]
Therefore,
\[
\begin{aligned}
\|F_h(t)X_0\|
&\le Ch^2\left(
\|u_{0,1}\|_2 + \|u_{0,2}\|_2
+ t(\|u_{0,1}\|_4 + \|u_{0,2}\|_4)
\right)\\
&\le Ch^2(\|u_{0,1}\|_4 + \|u_{0,2}\|_4)
= Ch^2 |||X_0|||_4.
\end{aligned}
\]

By interpolation between the cases $\beta=0$ and $\beta=4$, we obtain
\[
\|F_h(t)X_0\| \le Ch^{\frac{\beta}{2}} |||X_0|||_\beta,
\qquad 0 \le \beta \le 4.
\]

The proof for $G_h(t)X_0$ is identical, which establishes 
\eqref{main_est5}–\eqref{main_est6}.
\end{proof}

	We now prove Theorem \ref{stoch_err1}, which gives the strong error estimates for 
the finite element approximation of the stochastic linear Schrödinger equation.

\begin{proof}[\bf Proof of Theorem \ref{stoch_err1}]
It suffices to prove only the first inequality in \eqref{system8}; the second 
inequality follows in the same way.

The first component $u_1(t)$ of the mild solution \eqref{soleqn1} of 
\eqref{system1} is
\[
u_1(t)
= C(t)u_{0,1} - S(t)u_{0,2}
+ \int_0^t C(t-s)\,dW_1(s)
- \int_0^t S(t-s)\,dW_2(s), 
\qquad t \ge 0.
\]

Similarly, the first component $u_{h,1}(t)$ of \eqref{eqn5}, the solution of 
\eqref{eqn4}, is
\[
u_{h,1}(t)
= C_h(t)u_{h,0,1} -S_h(t)u_{h,0,2}
+ \int_0^t C_h(t-s)\mathcal{P}_h\,dW_1(s)
- \int_0^t S_h(t-s)\mathcal{P}_h\,dW_2(s),
\qquad t\ge0.
\]

Since $u_{h,0,1}=\mathcal{P}_h u_{0,1}$ and $u_{h,0,2}=\mathcal{P}_h u_{0,2}$, we obtain
\[
\begin{aligned}
u_{h,1}(t)-u_1(t)
&= \big(C_h(t)\mathcal{P}_h - C(t)\big)u_{0,1}
   -\big(S_h(t)\mathcal{P}_h - S(t)\big)u_{0,2} \\
&\qquad + \int_0^t \big(C_h(t-s)\mathcal{P}_h - C(t-s)\big)\,dW_1(s) \\
&\qquad - \int_0^t \big(S_h(t-s)\mathcal{P}_h - S(t-s)\big)\,dW_2(s).
\end{aligned}
\]

Using the definition of $F_h(t)$ from Theorem \ref{det_err2}, we may write
\[
\begin{aligned}
u_{h,1}(t)-u_1(t)
= F_h(t)X_0
&+ \int_0^t \big(C_h(t-s)\mathcal{P}_h - C(t-s)\big)\,dW_1(s) \\
&- \int_0^t \big(S_h(t-s)\mathcal{P}_h - S(t-s)\big)\,dW_2(s).
\end{aligned}
\]

Taking the $L^2(\Omega;\dot H^0)$–norm, we obtain
\[
\begin{aligned}
\|u_{h,1}(t)-u_1(t)\|_{L^2(\Omega;\dot H^0)}
&\le I_1(t) + I_2(t) + I_3(t),
\end{aligned}
\]
where
\[
\begin{aligned}
I_1(t) &= \|F_h(t)X_0\|_{L^2(\Omega;\dot H^0)},\\
I_2(t) &= \left\|\int_0^t (C_h(t-s)\mathcal{P}_h - C(t-s))\,dW_1(s)\right\|_{L^2(\Omega;\dot H^0)},\\
I_3(t) &= \left\|\int_0^t (S_h(t-s)\mathcal{P}_h - S(t-s))\,dW_2(s)\right\|_{L^2(\Omega;\dot H^0)}.
\end{aligned}
\]

From \eqref{main_est5},
\[
I_1^2(t)
= \mathbb{E}\|F_h(t)X_0\|^2
\le C_t^2 h^{\beta} \,\mathbb{E}|||X_0|||_{\beta}^2.
\]

By Itô’s isometry,
\[
I_2^2(t)
= \int_0^t 
\|(C_h(t-s)\mathcal{P}_h - C(t-s))Q_1^{1/2}\|_{HS}^2\,ds.
\]

Introduce the operator
\[
K_h(t)g = (C_h(t)\mathcal{P}_h - C(t))g, \qquad g\in \dot H^\beta.
\]

From \eqref{main_est5} (with $u_{0,2}=0$),
\[
\|K_h(t)g\| \le C h^{\beta/2} \|g\|_\beta.
\]

Using the orthonormal basis $\{e_i\}$ of $L^2(\mathcal O)$,
\[
\begin{aligned}
I_2^2(t)
&= \int_0^t \sum_{i=1}^\infty 
   \|K_h(t-s)Q_1^{1/2}e_i\|^2\,ds \\
&\le Ch^\beta t 
   \sum_{i=1}^\infty \|\Lambda^{\beta/2}Q_1^{1/2}e_i\|^2 \\
&\le Ch^\beta t\|\Lambda^{\beta/2}Q_1^{1/2}\|_{HS}^2.
\end{aligned}
\]

Similarly,
\[
I_3^2(t)
= \int_0^t 
\|(S_h(t-s)\mathcal{P}_h - S(t-s))Q_2^{1/2}\|_{HS}^2\,ds.
\]

Introduce
\[
L_h(t)g = (S_h(t)\mathcal{P}_h - S(t))g, \qquad g\in \dot H^\beta.
\]

From \eqref{main_est6} (with $u_{0,2}=0$),
\[
\|L_h(t)g\| \le C h^{\beta/2} \|g\|_\beta.
\]

Hence,
\[
I_3^2(t)
\le Ch^\beta t \|\Lambda^{\beta/2}Q_2^{1/2}\|_{HS}^2.
\]

Combining the bounds on $I_1(t)$, $I_2(t)$, and $I_3(t)$, we obtain
\[
\|u_{h,1}(t)-u_1(t)\|_{L^2(\Omega;\dot H^0)}
\le C_t h^{\frac{\beta}{2}} 
\left(
|||X_0|||_{L^2(\Omega;H^\beta)}
+ \|\Lambda^{\beta/2}Q_1^{1/2}\|_{HS}
+ \|\Lambda^{\beta/2}Q_2^{1/2}\|_{HS}
\right),
\]
which proves the first estimate of \eqref{system8}.  
The second estimate follows analogously.

This completes the proof.
\end{proof}

	\section{Numerical Experiments}\label{S_4}

In this section, we present numerical computations supporting the theoretical strong convergence results established in Theorem~\ref{stoch_err1} for the stochastic linear Schr\"{o}dinger equation. We illustrate the behaviour of the finite element approximation through representative numerical examples. For the temporal discretization, we employ the implicit backward Euler method.

\subsection{Computational Analysis}

We consider the following system for $T>0$:
\[
\begin{bmatrix}
du_{h,1}(t)\\[2mm]
du_{h,2}(t)
\end{bmatrix}
=
\begin{bmatrix}
0 & -\Lambda_h\\
\Lambda_h & 0
\end{bmatrix}
\begin{bmatrix}
u_{h,1}(t)\\[2mm]
u_{h,2}(t)
\end{bmatrix}\,dt
+
\begin{bmatrix}
\mathcal{P}_h dW_1(t)\\[2mm]
\mathcal{P}_h dW_2(t)
\end{bmatrix},
\qquad t\in[0,T].
\]

Let $P_N=\{0=t_0<t_1<\cdots<t_N=T\}$ be a uniform partition of $[0,T]$ with step size $k=T/N$, and denote $I_n=(t_{n-1},t_n)$ for $n=1,\ldots,N$.  
The backward Euler time discretization then reads
\[
\begin{bmatrix}
U^n_1 \\
U^n_2
\end{bmatrix}
-
\begin{bmatrix}
U^{n-1}_1 \\
U^{n-1}_2
\end{bmatrix}
=
\begin{bmatrix}
0 & -k\Lambda_h\\
k\Lambda_h & 0
\end{bmatrix}
\begin{bmatrix}
U^n_1\\
U^n_2
\end{bmatrix}
+
\begin{bmatrix}
\mathcal{P}_h\Delta W_1^n\\
\mathcal{P}_h\Delta W_2^n
\end{bmatrix},
\]
where $U_i^n\in V_h$ approximates $u_i(\cdot,t_n)$ for $i=1,2$ and $n=1,\ldots,N$.

Rewriting the scheme in a more convenient form yields
\begin{equation}\label{time_dis}
\begin{bmatrix}
I & k\Lambda_h\\
-k\Lambda_h & I
\end{bmatrix}
\begin{bmatrix}
U_1^{n}\\[1mm]
U_2^{n}
\end{bmatrix}
=
\begin{bmatrix}
U_1^{n-1}\\[1mm]
U_2^{n-1}
\end{bmatrix}
+
\begin{bmatrix}
\mathcal{P}_h\Delta W_1^n\\[1mm]
\mathcal{P}_h\Delta W_2^n
\end{bmatrix}.
\end{equation}
We approximate the noise by a truncated Fourier expansion.  
For the noises $W_i$, $i=1,2$, we have for all $\chi\in V_h$,
\begin{equation} \label{system10}
    (\mathcal{P}_h \Delta W_i^n, \chi)
    = \sum_{j=1}^{\infty} \gamma_{j,i}^{1/2}\,\Delta \beta^n_{j,i}(e_j,\chi)
    \;\approx\;
    \sum_{j=1}^{J} \gamma_{j,i}^{1/2}\,\Delta \beta^n_{j,i}(e_j,\chi),
\end{equation}
where the infinite series is truncated after $J$ terms.  
Here $\{\beta_{j,i}(t)\}_{j=1}^J$ denote mutually independent standard real-valued Brownian motions for each $i=1,2$.  
The increments in \eqref{system10} satisfy
\[
    \Delta\beta^n_{j,i}
    = \beta_{j,i}(t_n)-\beta_{j,i}(t_{n-1})
    \sim \sqrt{k}\,\mathcal{N}(0,1),
\]
where $\mathcal{N}(0,1)$ denotes a Gaussian random variable with mean zero and unit variance.  
For space–time white noise, we also have $\gamma_{j,i}=1$.

We denote by $X_h^J=(u_{h,1}^J,u_{h,2}^J)^{\mathrm{T}}$ the semidiscrete solution obtained with the truncated noise, that is,
\begin{equation} \label{system11}
    X_h^J(t)
    =E_h(t)X_{h,0}
    +\sum_{j=1}^{J}\int_{0}^{t}E_h(t-s)\mathcal{P}_h e_j\,d\beta_j(s),
    \qquad t\in[0,T],
\end{equation}
where 
\[
    \beta_j(s)
    =\big(\gamma_{j,1}^{1/2}\beta_{j,1}(s),\, \gamma_{j,2}^{1/2}\beta_{j,2}(s)\big)^{\mathrm{T}} .
\]

\begin{lemma}
Let $X_h^J$ and $X_h$ be defined by \eqref{system11} and \eqref{eqn5}, respectively.  
Assume that $\Lambda$, $Q_1$, and $Q_2$ share a common orthonormal eigenbasis $\{e_j\}_{j=1}^{\infty}$, and that $V_h$ (of dimension $N_h$) is constructed on a family of quasi-uniform triangulations $\{\mathcal{T}_h\}$ of $\mathcal{O}$.  
If $J\ge N_h$ and
\[
    \|\Lambda^{\beta/2}Q_1^{1/2}\|_{HS}
    +\|\Lambda^{\beta/2}Q_2^{1/2}\|_{HS}<\infty,
    \qquad \beta\in[0,4],
\]
then there exists $C=C_{t,J}>0$ such that, for all $t\ge 0$,
\begin{equation}\label{numest2}
\begin{split}
    \|u_{h,1}^J(t)-u_{h,1}(t)\|_{L^2(\Omega;\dot{H}^0)}
    &\le Ch^{\beta/2}
        \big(
            \|\Lambda^{\beta/2}Q_1^{1/2}\|_{HS}
            +\|\Lambda^{\beta/2}Q_2^{1/2}\|_{HS}
        \big),\\
    \|u_{h,2}^J(t)-u_{h,2}(t)\|_{L^2(\Omega;\dot{H}^0)}
    &\le Ch^{\beta/2}
        \big(
            \|\Lambda^{\beta/2}Q_1^{1/2}\|_{HS}
            +\|\Lambda^{\beta/2}Q_2^{1/2}\|_{HS}
        \big).
\end{split}
\end{equation}
\end{lemma}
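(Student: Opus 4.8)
The plan is to reduce the difference $X_h-X_h^J$ to the \emph{tail} of a series of independent stochastic integrals, estimate each summand crudely using the uniform bounds $\|C_h(\tau)\|_{\mathcal{L}(V_h)},\,\|S_h(\tau)\|_{\mathcal{L}(V_h)}\le 1$ and $\|\mathcal{P}_h\|\le 1$, and then absorb the resulting tail $\sum_{j>J}\gamma_{j,i}$ into the weighted sums $\sum_{j\ge1}\gamma_{j,i}\lambda_j^\beta=\|\Lambda^{\beta/2}Q_i^{1/2}\|_{HS}^2$ by means of a spectral lower bound for the eigenvalues $\lambda_j$ of $\Lambda$ of index $j\ge N_h$. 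It is enough to prove the first estimate in \eqref{numest2}; the second is handled identically.

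First I would fix $t$, write $dW_i(s)=\sum_{j\ge1}\gamma_{j,i}^{1/2}e_j\,d\beta_{j,i}(s)$ with mutually independent standard real-valued Brownian motions $\{\beta_{j,i}\}$, expand $E_h$ through $C_h$ and $S_h$, and subtract \eqref{system11} from \eqref{eqn5}. Since $u_{h,0,i}=\mathcal{P}_hu_{0,i}$, the deterministic (initial-data) parts cancel and one is left with the tail
\begin{equation*}
u_{h,1}(t)-u_{h,1}^J(t)=\sum_{j>J}\Big(\gamma_{j,1}^{1/2}\!\int_0^t C_h(t-s)\mathcal{P}_h e_j\,d\beta_{j,1}(s)-\gamma_{j,2}^{1/2}\!\int_0^t S_h(t-s)\mathcal{P}_h e_j\,d\beta_{j,2}(s)\Big).
\end{equation*}
All the scalar Brownian motions occurring here are independent and each stochastic integral has zero mean, so the cross terms vanish in $L^2(\Omega)$ and, applying It\^o's isometry termwise,
\begin{equation*}
\|u_{h,1}(t)-u_{h,1}^J(t)\|_{L^2(\Omega;\dot{H}^0)}^2=\sum_{j>J}\Big(\gamma_{j,1}\!\int_0^t\|C_h(t-s)\mathcal{P}_h e_j\|^2\,ds+\gamma_{j,2}\!\int_0^t\|S_h(t-s)\mathcal{P}_h e_j\|^2\,ds\Big).
\end{equation*}

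Next I would use that $\Lambda_h$ is self-adjoint and positive definite on $V_h$, so $\cos(\tau\Lambda_h)$ and $\sin(\tau\Lambda_h)$ have operator norm at most $1$ on $V_h$; combined with $\|\mathcal{P}_h e_j\|\le\|e_j\|=1$ this yields $\int_0^t\|C_h(t-s)\mathcal{P}_h e_j\|^2\,ds\le t$ and likewise for $S_h$, hence $\|u_{h,1}(t)-u_{h,1}^J(t)\|_{L^2(\Omega;\dot{H}^0)}^2\le t\sum_{j>J}(\gamma_{j,1}+\gamma_{j,2})$. The decisive step is then the spectral estimate: on a quasi-uniform family $N_h=\dim V_h\approx h^{-d}$, and by the Weyl asymptotics $\lambda_j\approx j^{2/d}$ for the Dirichlet Laplacian on $\mathcal{O}$ (or, to cover all $h$, the eigenvalue counting bound $\#\{j:\lambda_j\le\mu\}\le C_{\mathcal{O}}(1+\mu^{d/2})$), so $\lambda_{N_h}\ge c_{\mathcal{O}}h^{-2}$ and therefore $\lambda_j\ge c_{\mathcal{O}}h^{-2}$ for every $j\ge N_h$. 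Consequently, for $j>J\ge N_h$ and $\beta\in[0,2]$ one has $1\le c_{\mathcal{O}}^{-\beta}h^{2\beta}\lambda_j^\beta\le Ch^{2\beta}\lambda_j^\beta$, with $C$ depending only on $\mathcal{O}$ and uniform over $\beta\in[0,2]$.

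Finally I would substitute this into the previous inequality, use $\sum_{j>J}\gamma_{j,i}\lambda_j^\beta\le\sum_{j\ge1}\gamma_{j,i}\lambda_j^\beta=\|\Lambda^{\beta/2}Q_i^{1/2}\|_{HS}^2$ (which relies on the common eigenbasis, on $\Lambda^{\beta/2}e_j=\lambda_j^{\beta/2}e_j$, and on the standing finiteness hypothesis), and thereby obtain
\begin{equation*}
\|u_{h,1}(t)-u_{h,1}^J(t)\|_{L^2(\Omega;\dot{H}^0)}^2\le Cth^{2\beta}\big(\|\Lambda^{\beta/2}Q_1^{1/2}\|_{HS}^2+\|\Lambda^{\beta/2}Q_2^{1/2}\|_{HS}^2\big).
\end{equation*}
Taking square roots and using $\sqrt{a+b}\le\sqrt{a}+\sqrt{b}$ gives the first estimate in \eqref{numest2} with $C_t=\sqrt{Ct}$, increasing in $t$; the bound for the second component follows in the same way. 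I expect the only genuine obstacle to be the spectral lower bound $\lambda_j\ge c_{\mathcal{O}}h^{-2}$ for $j\ge N_h$ — the rigorous form of the heuristic that the noise modes beyond the $N_h$-th all lie above the $\sim h^{-2}$ resolution scale of $V_h$ — which is exactly where quasi-uniformity of the triangulations and Weyl-type eigenvalue counting enter; the remaining ingredients (orthogonality of independent stochastic integrals, contractivity of $C_h$, $S_h$ and $\mathcal{P}_h$, and It\^o's isometry) are routine.
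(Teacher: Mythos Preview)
Your proof is correct. It differs from the paper's argument in one respect worth noting. After reaching
\[
\|u_{h,1}(t)-u_{h,1}^J(t)\|_{L^2(\Omega;\dot H^0)}^2=\sum_{j>J}\Big(\gamma_{j,1}\!\int_0^t\|C_h(s)\mathcal{P}_h e_j\|^2\,ds+\gamma_{j,2}\!\int_0^t\|S_h(s)\mathcal{P}_h e_j\|^2\,ds\Big),
\]
the paper inserts the continuous operators by writing $C_h(s)\mathcal{P}_h e_j=(C_h(s)\mathcal{P}_h-C(s))e_j+C(s)e_j$ (and similarly for $S_h$), which produces four terms $I_1,\dots,I_4$. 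The cross-error pieces $I_1,I_3$ are then bounded via Theorem~\ref{det_err2}, while $I_2,I_4$ are handled exactly as you do, using $\|C(s)e_j\|\le 1$ and the tail estimate $\sum_{j>J}\gamma_{j,i}\le\lambda_{J+1}^{-\beta}\|\Lambda^{\beta/2}Q_i^{1/2}\|_{HS}^2$ together with $\lambda_{J+1}^{-1}\le CJ^{-2/d}\le CN_h^{-2/d}\le Ch^2$. Your route bypasses this splitting entirely by using the contraction bound $\|C_h(s)\mathcal{P}_h e_j\|\le 1$ directly, so you obtain only the analogue of $I_2+I_4$ and never need to invoke Theorem~\ref{det_err2}. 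This is strictly simpler and yields the same final estimate; the paper's detour through the semigroup error is redundant here, since both pieces end up controlled by the same quantity $Cth^{2\beta}\|\Lambda^{\beta/2}Q_i^{1/2}\|_{HS}^2$. The spectral step you flag as the crux---$\lambda_j\ge c_{\mathcal{O}}h^{-2}$ for $j\ge N_h$ from Weyl asymptotics $\lambda_j\approx j^{2/d}$ and quasi-uniformity $N_h\approx h^{-d}$---is identical in both arguments.
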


\begin{proof}
We prove only the estimate for the first component; the second follows analogously.  
From \eqref{system11} and \eqref{eqn5}, for $t\ge 0$,
\[
\begin{split}
u_{h,1}(t)-u_{h,1}^J(t)
    &=\sum_{j=J+1}^{\infty}\gamma_{j,1}^{1/2}
        \int_{0}^{t}C_h(t-s)\mathcal{P}_h e_j\,d\beta_{j,1}(s) \\
    &\qquad
        -\sum_{j=J+1}^{\infty}\gamma_{j,2}^{1/2}
        \int_{0}^{t}S_h(t-s)\mathcal{P}_h e_j\,d\beta_{j,2}(s).
\end{split}
\]

Applying It\^o's isometry, independence of the Brownian motions, and the error properties of $C_h$ and $S_h$, we obtain
\begin{equation}\label{numest1}
\begin{split}
&\|u_{h,1}(t)-u_{h,1}^J(t)\|_{L^2(\Omega;\dot{H}^0)}^{2}  \\
&\quad\le
2\sum_{j=J+1}^{\infty}\gamma_{j,1}
    \int_{0}^{t}\|C_h(s)\mathcal{P}_h e_j\|^{2}\,ds
+2\sum_{j=J+1}^{\infty}\gamma_{j,2}
    \int_{0}^{t}\|S_h(s)\mathcal{P}_h e_j\|^{2}\,ds \\
&\quad=
2\sum_{j=J+1}^{\infty}\gamma_{j,1}
    \int_{0}^{t}\|(C_h(s)\mathcal{P}_h-C(s))e_j+C(s)e_j\|^{2}\,ds \\
&\qquad
+2\sum_{j=J+1}^{\infty}\gamma_{j,2}
    \int_{0}^{t}\|(S_h(s)\mathcal{P}_h-S(s))e_j+S(s)e_j\|^{2}\,ds\\
&\quad\le
4\sum_{j=J+1}^{\infty}\gamma_{j,1}
    \int_{0}^{t}\|(C_h(s)\mathcal{P}_h-C(s))e_j\|^{2}ds
+4\sum_{j=J+1}^{\infty}\gamma_{j,1}
    \int_{0}^{t}\|C(s)e_j\|^{2}ds\\
&\qquad
+4\sum_{j=J+1}^{\infty}\gamma_{j,2}
    \int_{0}^{t}\|(S_h(s)\mathcal{P}_h-S(s))e_j\|^{2}ds
+4\sum_{j=J+1}^{\infty}\gamma_{j,2}
    \int_{0}^{t}\|S(s)e_j\|^{2}ds \\
&\quad:= I_1(t)+I_2(t)+I_3(t)+I_4(t).
\end{split}
\end{equation}

Using \eqref{main_est5} with $X_0=(e_j,0)^{\mathrm{T}}$, we get  
\[
\begin{split}
I_1(t)
    &=4\sum_{j=J+1}^{\infty}\gamma_{j,1}
        \int_{0}^{t}\|K_h(s)e_j\|^{2}ds \\
    &\le 4C h^{\beta} t
        \sum_{j=J+1}^{\infty}\|\Lambda^{\beta/2}Q_1^{1/2}e_j\|^{2}
    \le 4Ch^{\beta}t\|\Lambda^{\beta/2}Q_1^{1/2}\|_{HS}^{2}.
\end{split}
\]

Similarly,
\[
I_3(t)
    \le 4Ch^{\beta}t\|\Lambda^{\beta/2}Q_2^{1/2}\|_{HS}^{2}.
\]

For $I_2(t)$,
\[
\begin{split}
I_2(t)
    &=4\sum_{j=J+1}^{\infty}\gamma_{j,1}
        \int_{0}^{t}\cos^{2}(s\lambda_j)\,ds \\
    &\le 4t\sum_{j=J+1}^{\infty}\gamma_{j,1}
        =4t\sum_{j=J+1}^{\infty}\lambda_j^{-\beta/2}(\lambda_j^{\beta/2}\gamma_{j,1})\\
    &\le 4t \lambda_{J+1}^{-\beta/2}
        \|\Lambda^{\beta/4}Q_1^{1/2}\|_{HS}^{2} \leq  4t \lambda_{J+1}^{-\beta/2}
        \|\Lambda^{\beta/2}Q_1^{1/2}\|_{HS}^{2}.
\end{split}
\]

Similarly,
\[
I_4(t)
    \le 4t \lambda_{J+1}^{-\beta/2}
        \|\Lambda^{\beta/2}Q_2^{1/2}\|_{HS}^{2}.
\]

Finally, since for quasi-uniform meshes,
\[
N_h \approx h^{-d},
\qquad \lambda_j \approx j^{2/d},
\]
we have
\[
\lambda_{J+1}^{-1}
    \le CJ^{-2/d}
    \le C N_h^{-2/d}
    \le Ch^{2}.
\]

Combining the four terms $I_1$–$I_4$ in \eqref{numest1} gives \eqref{numest2}, completing the proof.
\end{proof}
\begin{remark}
\begin{enumerate}[(i)]
    \item The above lemma shows that, under appropriate assumptions on the triangulation and on the covariance operators $Q_i$, $i=1,2$, it is sufficient to choose $J \ge N_h$, where $N_h=\mathrm{dim}(V_h)$.  
    In this case, the convergence order of the finite element method is preserved.

    \item In general, the operators $\Lambda$ and $\{Q_i: i=1,2\}$ may not possess a common orthonormal basis of eigenfunctions.  
    In practical computations, the eigenfunctions of $\{Q_i: i=1,2\}$ are typically not known in closed form.  
    To represent $\mathcal{P}_h W_i$, one would need to solve the discrete eigenvalue problem  
    \[
        Q_i \phi = \lambda \phi \qquad \text{in } V_h,
    \]
    which becomes computationally expensive when $Q_i$ is given through an integral kernel.  
    If the kernel is sufficiently smooth, more efficient techniques can be used; see, for example, \cite{Cschwab}.
\end{enumerate}
\end{remark}

	\subsection{Numerical Example}
	We consider the following stochastic linear Schr\"{o}dinger equation in one spatial dimension.
	\begin{equation}\label{eqn_num1}
		\begin{split}
			&du+\,i \Delta u\,dt =dW_1 +i\, dW_2  \quad \text{in}\quad ( 0,1)\times  ( 0,1), \\
			&u(t,0)=0=u(t,1), \quad t\in (0,1),\\
			&u(0,x)=\sin(2\pi x)+i\,x(1-x), \quad x\in ( 0,1).
		\end{split}
	\end{equation}
	To find a numerical error, we consider that numerical solution with a very finer mesh (say $h_{\text{ref}}$) to be exact. We find the approximate value of $u(x,1)=u_1(x,1)+i\,u_2(x,1),$ using the implicit Euler method for time discretization with a very small fixed time step $k.$ Applying the time stepping \eqref{time_dis} to the system \eqref{eqn_num1} and considering the finite element approximation $V_h$ with mass matrix $M$, we obtain the discrete system 
	\begin{equation}\label{time_dis2}
		(M+kL_h)X^n=MX^{n-1}+B.
	\end{equation}
	We note that for a deterministic system i.e. $B=0$, the expected rate of convergence for both real and imaginary components in the $L^2$ norm is $2$ (see \eqref{main_est3} and \eqref{main_est4}).
	
	Let $\{\lambda_j\}_{j=1}^\infty$ be eigen values of $\Lambda$ and we take $Q_1=Q_2=\Lambda^{-s},\,s\in\mathbb{R}.$ Then, we have for $i=1,2,$
	$$
	\|\Lambda ^{\beta /2}Q_i^{1/2}\|_{HS}^2=\|\Lambda ^{(\beta-s) /2}\|_{HS}^2=\sum_{j=1}^\infty\lambda_j^{\beta-s} 
	\approx \sum_{j=1}^\infty j^{\frac{2}{d}(\beta-s)},
	$$
	which is finite if and only if $\beta< s-\frac{d}{2},$ where $d$ is the dimension of the spatial domain $\mathcal{O}.$
	In the example above in \eqref{eqn_num1}, $d=1.$ Hence, we require $\beta<s-\frac{1}{2}.$

\begin{figure}[htbp]
    \centering
    \begin{minipage}{0.45\textwidth}
        \centering
        \includegraphics[width=\linewidth]{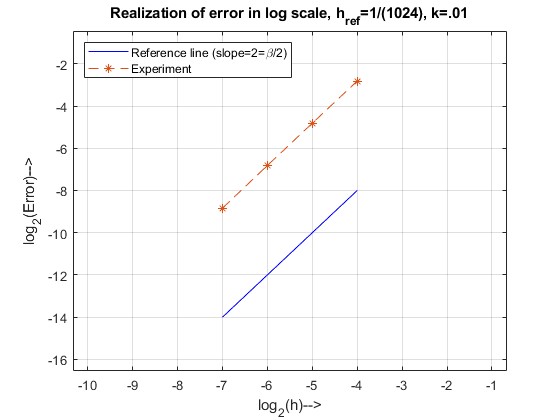} 
        \caption{The order of strong convergence in $L^2$-norm for deterministic problem}
        \label{fig1}
    \end{minipage}
    \hspace{0.05\textwidth} 
    \begin{minipage}{0.45\textwidth}
        \centering
        \includegraphics[width=\linewidth]{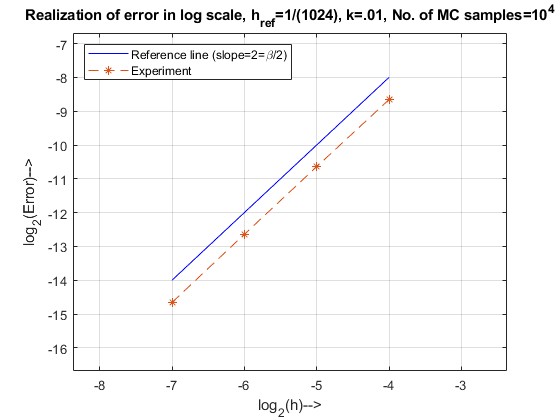} 
        \caption{The order of strong convergence in $L^2$-norm for stochastic problem}
        \label{fig2}
    \end{minipage}
\end{figure}
		In the numerical experiment, we have considered two cases:
		\begin{enumerate}[(i)]
			\item Deterministic linear Schr\"{o}dinger equation: $\beta=4, d=1,$ see Figure \ref{fig1}.
			\item Stochastic linear  Schr\"{o}dinger equation: $\beta=4, d=1,$ hence, $s>4+\frac{1}{2}.$ We choose $s=4+\frac{1}{2}+0.001$, see Figure \ref{fig2}.
		\end{enumerate}
		In above, we take $h_{\text{ref}}=2^{-10}$(step step for reference solution) and 
		$k=0.01$(time step) and $10^4$ Monte-Carlo samples for sampling in the stochastic case.

	\section{Conclusion}
	In this article, we have studied semi-discrete (in spatial variable) finite element approximations of stochastic linear Schr\"{o}dinger equation driven by additive Wiener noise. In a future work, we plan study stochastic semi-linear Schr\"{o}dinger equation driven by multiplicative Wiener noise and also strong convergence of fully (both in space and time) discretized model. Also, we will study weak convergence of the numerical approximation.
	
	\medskip
	\noindent
	{\bf{{Acknowledgement:}}} We express our gratitude to the Department of Mathematics and
	Statistics at the Indian Institute of Technology Kanpur for providing a conductive research
	environment. For this work,  M. Prasad is grateful
	for the support received through MHRD, Government of India (GATE fellowship). M. Biswas acknowledges support from IIT Kanpur. S. Bhar acknowledges the support from the SERB MATRICS grant (MTR/2021/000517), Government of India.

	
	\bibliographystyle{plain}

\begin{thebibliography}{10}

\bibitem{DCohen16}
Rikard Anton, David Cohen, Stig Larsson, and Xiaojie Wang.
\newblock Full discretization of semilinear stochastic wave equations driven by multiplicative noise.
\newblock {\em SIAM J. Numer. Anal.}, 54(2):1093--1119, 2016.

\bibitem{Brenner}
Susanne~C. Brenner and L.~Ridgway Scott.
\newblock {\em The mathematical theory of finite element methods}, volume~15 of {\em Texts in Applied Mathematics}.
\newblock Springer, New York, third edition, 2008.

\bibitem{courantL}
Thierry Cazenave.
\newblock {\em Semilinear {S}chr\"odinger equations}, volume~10 of {\em Courant Lecture Notes in Mathematics}.
\newblock New York University, Courant Institute of Mathematical Sciences, New York; American Mathematical Society, Providence, RI, 2003.

\bibitem{chuchuhong}
Chuchu Chen, Jialin Hong, and Lihai Ji.
\newblock Mean-square convergence of a symplectic local discontinuous {G}alerkin method applied to stochastic linear {S}chr\"odinger equation.
\newblock {\em IMA J. Numer. Anal.}, 37(2):1041--1065, 2017.

\bibitem{pchow}
Pao-Liu Chow.
\newblock {\em Stochastic partial differential equations}.
\newblock Chapman \& Hall/CRC Applied Mathematics and Nonlinear Science Series. Chapman \& Hall/CRC, Boca Raton, FL, 2007.

\bibitem{ciarlet}
Philippe~G. Ciarlet.
\newblock {\em The finite element method for elliptic problems}, volume~40 of {\em Classics in Applied Mathematics}.
\newblock Society for Industrial and Applied Mathematics (SIAM), Philadelphia, PA, 2002.
\newblock Reprint of the 1978 original [North-Holland, Amsterdam; MR0520174 (58 \#25001)].

\bibitem{DCohen13}
David Cohen, Stig Larsson, and Magdalena Sigg.
\newblock A trigonometric method for the linear stochastic wave equation.
\newblock {\em SIAM J. Numer. Anal.}, 51(1):204--222, 2013.

\bibitem{Dcohen12}
David Cohen and Xavier Raynaud.
\newblock Convergent numerical schemes for the compressible hyperelastic rod wave equation.
\newblock {\em Numer. Math.}, 122(1):1--59, 2012.

\bibitem{jianbo}
Jianbo Cui, Jialin Hong, and Zhihui Liu.
\newblock Strong convergence rate of finite difference approximations for stochastic cubic {S}chr\"odinger equations.
\newblock {\em J. Differential Equations}, 263(7):3687--3713, 2017.

\bibitem{prato}
Giuseppe Da~Prato and Jerzy Zabczyk.
\newblock {\em Stochastic equations in infinite dimensions}, volume~44 of {\em Encyclopedia of Mathematics and its Applications}.
\newblock Cambridge University Press, Cambridge, 1992.

\bibitem{dautray}
Robert Dautray and Jacques-Louis Lions.
\newblock {\em Mathematical analysis and numerical methods for science and technology. {V}ol. 5}.
\newblock Springer-Verlag, Berlin, 1992.
\newblock Evolution problems. I, With the collaboration of Michel Artola, Michel Cessenat and H\'el\`ene Lanchon, Translated from the French by Alan Craig.

\bibitem{Feng}
Xiaobing Feng and Shu Ma.
\newblock Stable numerical methods for a stochastic nonlinear {S}chr\"odinger equation with linear multiplicative noise.
\newblock {\em Discrete Contin. Dyn. Syst. Ser. S}, 15(4):687--711, 2022.

\bibitem{kovacs9bit}
Matthias Geissert, Mih\'aly Kov\'acs, and Stig Larsson.
\newblock Rate of weak convergence of the finite element method for the stochastic heat equation with additive noise.
\newblock {\em BIT}, 49(2):343--356, 2009.

\bibitem{griffiths}
David~J Griffiths and Darrell~F Schroeter.
\newblock {\em Introduction to quantum mechanics}.
\newblock Cambridge university press, 2018.

\bibitem{pgrisvard}
Pierre Grisvard.
\newblock {\em Elliptic problems in nonsmooth domains}, volume~69 of {\em Classics in Applied Mathematics}.
\newblock Society for Industrial and Applied Mathematics (SIAM), Philadelphia, PA, 2011.
\newblock Reprint of the 1985 original [MR0775683], With a foreword by Susanne C. Brenner.

\bibitem{Igyongy}
Istv\'an Gy\"ongy.
\newblock Approximations of stochastic partial differential equations.
\newblock In {\em Stochastic partial differential equations and applications ({T}rento, 2002)}, volume 227 of {\em Lecture Notes in Pure and Appl. Math.}, pages 287--307. Dekker, New York, 2002.

\bibitem{kovacs10NA}
Mih\'aly Kov\'acs, Stig Larsson, and Fredrik Lindgren.
\newblock Strong convergence of the finite element method with truncated noise for semilinear parabolic stochastic equations with additive noise.
\newblock {\em Numer. Algorithms}, 53(2-3):309--320, 2010.

\bibitem{Kovacs10Siam}
Mih\'aly Kov\'acs, Stig Larsson, and Fardin Saedpanah.
\newblock Finite element approximation of the linear stochastic wave equation with additive noise.
\newblock {\em SIAM J. Numer. Anal.}, 48(2):408--427, 2010.

\bibitem{Amartin}
Andreas Martin, Sergei~M. Prigarin, and Gerhard Winkler.
\newblock Exact and fast numerical algorithms for the stochastic wave equation.
\newblock {\em Int. J. Comput. Math.}, 80(12):1535--1541, 2003.

\bibitem{rockner}
Claudia Pr\'ev\^ot and Michael R\"ockner.
\newblock {\em A concise course on stochastic partial differential equations}, volume 1905 of {\em Lecture Notes in Mathematics}.
\newblock Springer, Berlin, 2007.

\bibitem{MR2224753}
Llu\'is Quer-Sardanyons and Marta Sanz-Sol\'e.
\newblock Space semi-discretisations for a stochastic wave equation.
\newblock {\em Potential Anal.}, 24(4):303--332, 2006.

\bibitem{CRoth}
Christian Roth.
\newblock Weak approximations of solutions of a first order hyperbolic stochastic partial differential equation.
\newblock {\em Monte Carlo Methods Appl.}, 13(2):117--133, 2007.

\bibitem{Cschwab}
Christoph Schwab and Radu~Alexandru Todor.
\newblock Karhunen-{L}o\`eve approximation of random fields by generalized fast multipole methods.
\newblock {\em J. Comput. Phys.}, 217(1):100--122, 2006.

\bibitem{Thomee}
Vidar Thom\'{e}e.
\newblock {\em Galerkin finite element methods for parabolic problems}, volume~25 of {\em Springer Series in Computational Mathematics}.
\newblock Springer-Verlag, Berlin, second edition, 2006.

\bibitem{JBWalsh}
John~B. Walsh.
\newblock An introduction to stochastic partial differential equations.
\newblock In {\em \'Ecole d'\'et\'e{} de probabilit\'es de {S}aint-{F}lour, {XIV}---1984}, volume 1180 of {\em Lecture Notes in Math.}, pages 265--439. Springer, Berlin, 1986.

\bibitem{JBWalsh6}
John~B. Walsh.
\newblock On numerical solutions of the stochastic wave equation.
\newblock {\em Illinois J. Math.}, 50(1-4):991--1018, 2006.

\bibitem{Whittaker1989}
Edmund Whittaker.
\newblock {\em A History of the Theories of Aether and Electricity: Vol. I: The Classical Theories; Vol. II: The Modern Theories, 1900-1926}, volume~1.
\newblock Courier Dover Publications, 1989.

\bibitem{YYanBit4}
Yubin Yan.
\newblock Semidiscrete {G}alerkin approximation for a linear stochastic parabolic partial differential equation driven by an additive noise.
\newblock {\em BIT}, 44(4):829--847, 2004.

\bibitem{YYanSiam5}
Yubin Yan.
\newblock Galerkin finite element methods for stochastic parabolic partial differential equations.
\newblock {\em SIAM J. Numer. Anal.}, 43(4):1363--1384, 2005.

\end{thebibliography}

\end{document}